%%%%%%%%%%%%%%%%%%%%%%%%%%%%%%%%%%%%%%%%%%%%%%%%%%%%%%%%%%%%%
%%%%%%%%%%%%%%%%%%%%%%%%%%%%%%%%%%%%%%%%%%%%%%%%%%%%%%%%%%%%%
%																	                                                %
%																	                                                %
%																	                                                %
%   L. BOC THALER and F. FORSTNERIC           												            %
%																	                                                %
%   A LONG C^2 WITHOUT HOLOMORPHIC FUNCTIONS         	   					        	                              %
%                                                                         													            %
%																	                                                %
%																	                                                %
%																	                                                %
%																	                                                %
%   AMS-LaTeX 1.2 file for journals, based on amsart.cls.	         		              							            %
%																	                                                %
%%%%%%%%%%%%%%%%%%%%%%%%%%%%%%%%%%%%%%%%%%%%%%%%%%%%%%%%%%%%%
%%%%%%%%%%%%%%%%%%%%%%%%%%%%%%%%%%%%%%%%%%%%%%%%%%%%%%%%%%%%%

\documentclass[11pt]{amsart}

\usepackage[a4paper,hmargin=3.5cm,vmargin=4cm]{geometry}
\usepackage{amsfonts,amssymb,amscd,amstext}
\usepackage{graphicx}
\usepackage[dvips]{epsfig}
\usepackage{pstricks,pst-plot}
\usepackage{color}

%% TO MODIFY THE HEADINGS

\usepackage{fancyhdr}
\pagestyle{fancy}
\fancyhf{}

\input xy
\xyoption{all}

%% EDITING THE HEADINGS
%\renewcommand{\headrulewidth}{0pt}

%% Type of letter
%\usepackage{palatino}
\usepackage{times}

\usepackage{enumerate}
\usepackage{titlesec}
\usepackage{mathrsfs}

%% To do not cut the words
\pretolerance=2000
\tolerance=3000

% Margins

\headheight=13.03pt
\headsep 0.5cm\input
\topmargin 0.5cm
\textheight = 49\baselineskip
\textwidth 14cm
\oddsidemargin 1cm
\evensidemargin 1cm

\setlength{\parskip}{0.5em}

%% SECTIONS
\titleformat{\section}%[display]
{\filcenter\bfseries\large} {\thesection{.}}{0.2cm}{}%[$\vspace*{-1.0cm}$]
%%%%%%%%%%%%%%%%%%%%%%%%%%%%%%%%%%%%%%%%%%%%%%%%%%%%%%
%% SUBSECTIONS
\titleformat{\subsection}[runin]
{\bfseries} {\thesubsection{.}}{0.15cm}{}[.]
%%%%%%%%%%%%%%%%%%%%%%%%%%%%%%%%%%%%%%%%%%%%%%%%%%%%%%
%% SUBSUBSECTIONS
\titleformat{\subsubsection}[runin]
{\em}{\thesubsubsection{.}}{0.15cm}{}[.]
%%%%%%%%%%%%%%%%%%%%%%%%%%%%%%%%%%%%%%%%%%%%%%%%%%%%%%

%% Caption of figures
\usepackage[up,bf]{caption}
%\setlength{\captionmargin}{20pt}

%%%%%%%%%%
%%%%%%%%%%
%%%%%%%%%%
%%%%%%%%%%
%%%%%%%%%%
%%%%%%%%%%

\newtheorem{theorem}{Theorem}[section]
\newtheorem{proposition}[theorem]{Proposition}

\newtheorem{lemma}[theorem]{Lemma}
\newtheorem{corollary}[theorem]{Corollary}

\theoremstyle{definition}
\newtheorem{definition}[theorem]{Definition}
\newtheorem{remark}[theorem]{Remark}

\newtheorem{problem}[theorem]{Problem}

\numberwithin{equation}{section}
\numberwithin{figure}{section}

%%%%%%%%%%
%%%%%%%%%%
%%%%%%%%%%     CALIGRAPHIC CAPITAL
%%%%%%%%%%
%%%%%%%%%%
%%%%%%%%%%

\newcommand\Bcal{\mathcal{B}}

%%%%%%%%%%
%%%%%%%%%%
%%%%%%%%%%     SMALL BOLDFACE
%%%%%%%%%%
%%%%%%%%%%
%%%%%%%%%%

%%%%%%%%%%
%%%%%%%%%%
%%%%%%%%%%     MATH SCRIPT
%%%%%%%%%%
%%%%%%%%%%
%%%%%%%%%%

\newcommand\Cscr{\mathscr{C}}

\newcommand\Oscr{\mathscr{O}}

%%%%%%%%%%
%%%%%%%%%%
%%%%%%%%%%     UNDERLINED
%%%%%%%%%%
%%%%%%%%%%
%%%%%%%%%%

%%%%%%%%%%
%%%%%%%%%%
%%%%%%%%%%     MATH BLACKBOARD
%%%%%%%%%%
%%%%%%%%%%
%%%%%%%%%%

\newcommand\B{\mathbb{B}}
\newcommand\C{\mathbb{C}}

\newcommand\N{\mathbb{N}}

\newcommand\R{\mathbb{R}}

%%%%%%%%%%
%%%%%%%%%%
%%%%%%%%%%     FRAKTUR
%%%%%%%%%%
%%%%%%%%%%
%%%%%%%%%%

\newcommand\igot{\mathfrak{i}}

\renewcommand\igot{\mathfrak{i}}

%
%  typewriter
%

%
%  e, i and zero -  mathmode
%

\renewcommand\imath{\igot}

%
%  abbreviations
%

\newcommand\wh{\widehat}

\newcommand\hra{\hookrightarrow}

\newcommand\Psh{\mathrm{Psh}}

\newcommand\Aut{\mathrm{Aut}}

\newcommand\Id{\mathrm{Id}}

\newcommand\Wprocess{Wold process}

%%%%%%%%%%
%%%%%%%%%%
%%%%%%%%%%
%%%%%%%%%%
%%%%%%%%%%
%%%%%%%%%%

%\usepackage{showkeys}

\begin{document}

\fancyhead[LO]{A long $\C^2$ without holomorphic functions}
\fancyhead[RE]{Luka Boc Thaler and Franc Forstneri\v c}
\fancyhead[RO,LE]{\thepage}

\thispagestyle{empty}

%% Title
\vspace*{7mm}
\begin{center}
{\bf \LARGE A long $\C^2$ without holomorphic functions}
\vspace*{5mm}

%% Authors
{\large\bf Luka Boc Thaler and Franc Forstneri\v c}
\end{center}

%% Addresses and finantial support
%\footnote[0]{\vspace*{-0.4cm}
%}
%% Abstract, keywords, and MSC

\vspace*{7mm}

\begin{quote}
{\small
\noindent {\bf Abstract}\hspace*{0.1cm}
In this paper we construct for every integer $n>1$ a complex manifold  of dimension $n$ which
is exhausted by an increasing sequence of biholomorphic images of $\C^n$
(i.e., a {\em long $\C^n$}), but it does not admit any nonconstant holomorphic 
or plurisubharmonic functions (see Theorem \ref{th:main}). Furthermore, we introduce new holomorphic 
invariants of a complex manifold $X$, the {\em stable core} and the {\em strongly stable core}, 
%
% FF: I propose to add the following sentence
%
that are based on the long term behavior of hulls of compact sets with respect to an exhaustion of $X$.
We show that every compact polynomially convex set $B\subset \C^n$ such that $B=\overline{\mathring B}$
is the strongly stable core of a long $\C^n$; in particular, holomorphically nonequivalent sets 
give rise to nonequivalent long $\C^n$'s (see Theorems \ref{th:XB} and \ref{th:SC} (a)).
Furthermore, for every open set $U\subset \C^n$ there exists a long $\C^n$ whose stable core is dense in $U$  (see Theorem \ref{th:SC} (b)).
%
% FF: the next sentence has been reformulated per report B
%
It follows that for any $n>1$ there is a continuum of  pairwise nonequivalent long $\C^n$'s
with no nonconstant plurisubharmonic functions and no nontrivial holomorphic automorphisms.  
These results answer several long standing open problems.

\vspace*{0.1cm}
\noindent{\bf Keywords}\hspace*{0.1cm} Holomorphic function, Stein manifold, long $\C^n$, 
Fatou--Bieberbach domain, Chern--Moser normal form

\vspace*{0.1cm}

\noindent{\bf MSC (2010):}\hspace*{0.1cm}} 32E10, 32E30, 32H02

\date{\today}

\end{quote}

%
%
%   INTRODUCTION
%
%
\section{Introduction}\label{sec:intro}
A complex manifold $X$ of dimension $n$ is said to be a {\em long $\C^n$}
if it is the union of an increasing sequence of domains 
$X_1\subset X_2\subset X_3\subset \cdots \subset \bigcup_{j=1}^\infty X_j=X$ such 
that each $X_j$ is biholomorphic to the complex Euclidean space $\C^n$. It is immediate that any long $\C$
is biholomorphic to $\C$. However, for $n>1$, this class of complex manifolds is still very mysterious.
The long standing question, whether there exists a long $\C^n$ which is not biholomorphic to $\C^n$, 
was answered in 2010 by E.\ F.\ Wold (see \cite{Wold2010}) who constructed a long $\C^n$ 
that is not holomorphically convex, hence not a Stein manifold.  
Wold's construction is based on his examples of non-Runge Fatou--Bieberbach domains in $\C^n$ 
(see \cite{Wold2008}; an exposition of both results can be found in \cite[Section 4.20]{Forstneric2011}).
In spite of these interesting examples, the theory has not been developed since.
In particular, it remained unknown whether there exist long $\C^2$'s without nonconstant holomorphic
functions, and whether there exist at least two nonequivalent non-Stein long $\C^2$'s.

We begin with the following result which answers the first question affirmatively.

%
%   LONG C^n WITHOUT NONCONSTANT FUNCTIONS
%
\begin{theorem}\label{th:main}
For every integer $n>1$ there exists a long $\C^n$ without any nonconstant holomorphic 
or plurisubharmonic functions.
\end{theorem}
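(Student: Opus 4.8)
The plan is to realize $X$ as the direct limit of a carefully chosen sequence of injective holomorphic self-maps of $\C^n$, so that a single elementary maximum-principle argument kills \emph{all} plurisubharmonic functions at once; the genuine work is then concentrated in a polynomial-hull condition on the transition maps. First I would fix radii $0<r_1<r_2<\cdots$ with $r_j\to\infty$ and injective holomorphic maps $\phi_j\colon\C^n\to\C^n$, and set $X=\varinjlim(\C^n\xrightarrow{\phi_j}\C^n)$ with canonical embeddings $\iota_j\colon\C^n\hookrightarrow X$ satisfying $\iota_{j+1}\circ\phi_j=\iota_j$. Each $X_j:=\iota_j(\C^n)$ is a biholomorphic image of $\C^n$, so if the nested compacta $Y_j:=\iota_j(\overline{\B(0,r_j)})$ exhaust $X$, then $X$ is a long $\C^n$. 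The two properties I would demand are: (i) $\phi_j(\overline{\B(0,r_j)})\subseteq\overline{\B(0,r_{j+1})}$, which gives $Y_j\subseteq Y_{j+1}$ and, with the radii growing fast enough, $\bigcup_j Y_j=X$; and (ii) $\overline{\B(0,r_{j+1})}\subseteq\wh{\phi_j(\overline{\B(0,r_j)})}$, the polynomial hull. Together these force $\wh{\phi_j(\overline{\B(0,r_j)})}=\overline{\B(0,r_{j+1})}$.

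Granting such maps, the theorem follows quickly. Let $u$ be plurisubharmonic on $X$, put $u_j:=u\circ\iota_j$ (plurisubharmonic on $\C^n$, with $u_j=u_{j+1}\circ\phi_j$), and write $M_j:=\max_{\overline{\B(0,r_j)}}u_j=\max_{Y_j}u$. Using (ii), then the plurisubharmonic maximum principle $\sup_{\wh K}v=\sup_K v$, and finally (i) with $u_j=u_{j+1}\circ\phi_j$, one gets
\[
M_{j+1}=\max_{\overline{\B(0,r_{j+1})}}u_{j+1}\le\max_{\wh{\phi_j(\overline{\B(0,r_j)})}}u_{j+1}=\max_{\phi_j(\overline{\B(0,r_j)})}u_{j+1}=\max_{\overline{\B(0,r_j)}}u_j=M_j.
\]
Thus $(M_j)$ is non-increasing, and since the $Y_j$ exhaust $X$ we obtain $\sup_X u=\sup_j M_j=M_1$, which is attained on the compact set $Y_1$. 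By the strong maximum principle on the connected manifold $X$, $u$ is constant. Applying this to $u=\mathrm{Re}\,f$ and $u=-\mathrm{Im}\,f$ (both pluriharmonic, hence plurisubharmonic) shows that every holomorphic $f$ on $X$ is constant as well.

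Everything therefore reduces to constructing injective holomorphic self-maps satisfying (i)--(ii), and this I expect to be the main obstacle. The image $\phi_j(\overline{\B(0,r_j)})$ is a topologically trivial embedded ball lying inside $\overline{\B(0,r_{j+1})}$, yet it must have polynomial hull equal to the entire larger ball; such wild, strongly non-Runge behaviour is impossible in dimension one and is the geometric heart of the matter. I would produce $\phi_j$ by Anders\'en--Lempert theory (see \cite{Forstneric2011}): fix a thin compact ``labyrinth'' $T_j\subset\overline{\B(0,r_{j+1})}$, built from walls arranged in a shell surrounding the origin, whose polynomial hull is exactly $\overline{\B(0,r_{j+1})}$, and then find a holomorphic automorphism of $\C^n$, approximable by compositions of shears, that squeezes $\overline{\B(0,r_j)}$ into a prescribed thin neighbourhood of $T_j$ on which the hull condition survives. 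The delicate points are the design of $T_j$, the proof that the hull of the squeezed image still contains $\overline{\B(0,r_{j+1})}$, and the quantitative bookkeeping ensuring that the approximation errors accumulated over infinitely many stages—and the passage to the direct limit—do not destroy (i) or (ii). This construction is the analogue, pushed to its extreme, of Wold's non-Runge Fatou--Bieberbach domains \cite{Wold2008,Wold2010}.
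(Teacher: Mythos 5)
Your reduction to conditions (i)--(ii) runs into a fatal obstruction: those two conditions can only be satisfied trivially, and in the trivial case the rest of your argument collapses. Every point $p\in\partial\B(0,r_{j+1})$ is a peak point of the closed ball for entire functions (consider $g(z)=\exp\bigl(\sum_k z_k\overline{p_k}/r_{j+1}^2-1\bigr)$, which satisfies $|g|\le 1$ on $\overline{\B(0,r_{j+1})}$ with equality exactly at $p$); hence $p$ lies in the polynomial hull of a compact set $K\subset\overline{\B(0,r_{j+1})}$ only if $p\in K$. Since (i) and (ii) together force $\wh{\phi_j(\overline{\B(0,r_j)})}=\overline{\B(0,r_{j+1})}$, the image $\phi_j(\overline{\B(0,r_j)})$ must contain the entire sphere $\partial\B(0,r_{j+1})$. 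The open set $\phi_j(\B(0,r_j))$ lies in $\overline{\B(0,r_{j+1})}$ and therefore cannot meet that sphere, so $\phi_j(\partial\B(0,r_j))$ is a topological $(2n-1)$-sphere containing $\partial\B(0,r_{j+1})$, hence equal to it by invariance of domain; a connectedness argument then yields $\phi_j(\overline{\B(0,r_j)})=\overline{\B(0,r_{j+1})}$. Thus the only admissible $\phi_j$ map ball onto ball, no hull swelling occurs at all, and $Y_j=\iota_{j+1}\bigl(\phi_j(\overline{\B(0,r_j)})\bigr)=\iota_{j+1}\bigl(\overline{\B(0,r_{j+1})}\bigr)=Y_{j+1}$ for every $j$: the sets $Y_j$ are all equal and never exhaust $X$, so the step $\sup_X u=\sup_j M_j$ is unavailable. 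The ``labyrinth'' $T_j$ you propose hits the same wall, since $\wh{T_j}=\overline{\B(0,r_{j+1})}$ already forces $T_j\supset\partial\B(0,r_{j+1})$. (Your maximum-principle bookkeeping and the identity $\sup_{\wh K}v=\sup_K v$ for $v$ plurisubharmonic on $\C^n$ are fine; the maps you need simply do not exist.)

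The paper asks for something much weaker, which is achievable. One fixes a single compact set $K$ with nonempty interior and a countable dense sequence $(a_j)$ in $\C^n$, and arranges (Lemma \ref{lem:mainstep}, via Wold's process: a Runge embedding $\C^n\hra\C^*\times\C^{n-1}$ followed by an automorphism of $\C^*\times\C^{n-1}$ making the image of $K$ swallow Stolzenberg's set $M$, whose polynomial hull contains a neighborhood of a point outside the embedding) that at the $k$-th stage the hull of the image of $K$ captures the images of $a_1,\dots,a_k$ --- individual points, not a ball. In the limit $\wh K_{\Oscr(X)}$ contains a dense subset of $X_1\cong\C^n$, so every $f\in\Oscr(X)$ is bounded on a dense subset of $X_1$, hence bounded on $X_1$, hence constant there by Liouville and constant on $X$ by the identity principle; for plurisubharmonic functions one additionally seeds a dense set in each $X_k$, since there is no identity principle to propagate constancy beyond $X_1$. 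If you want to rescue your scheme, replace ``the hull contains the next ball'' by ``the hull contains the next finite batch of points from a dense sequence''; that is exactly the paper's construction.
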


%
%   FF: the next sentence is reformulated (report A)
%

Theorem \ref{th:main} is proved in Section \ref{sec:proof1}.
It contributes to the line of counterexamples to the classical Union Problem for Stein manifolds: 
{\em is an increasing union of Stein manifolds always Stein?} 
For domains in $\C^n$ this question was raised by Behnke and Thullen  in 1933 (see \cite{BehnkeThullen1933}), 
and they gave an affirmative answer in 1939 (see \cite{BehnkeStein1939}).
Some progress on the general question was made by Stein \cite{Stein1956} and 
Docquier and Grauert \cite{DocquierGrauert1960}. The first counterexample 
was given in any dimension $n\ge 3$  by J.\ E.\ Forn\ae ss in 1976 (cf.\ \cite{Fornaess1976});
he found an increasing union of balls that is not holomorphically convex, hence not Stein. 
The key ingredient in his proof is a construction of a biholomorphic map $\Phi\colon \Omega\to \Phi(\Omega)\subset \C^3$ 
on a bounded neighborhood $\Omega\subset \C^3$ of any compact set $K\subset \C^3$ with nonempty  interior 
such that the polynomial hull of $\Phi(K)$ is not contained in $\Phi(\Omega)$. 
(A phenomenon of this type was first described by Wermer in 1959; see \cite{Wermer1959}.)
In 1977, Forn{\ae}ss and Stout constructed an increasing union of three-dimensional polydiscs 
without nonconstant holomorphic functions (see \cite{FornaessStout1977}). 
Increasing unions of hyperbolic Stein manifolds were studied further by 
Forn{\ae}ss and Sibony (see \cite{FornaessSibony1981}) and Forn{\ae}ss (see \cite{Fornaess2004}). 
The first counterexample to the union problem in dimension $n=2$ was the result 
of Wold (cf.\ \cite{Wold2010}) on the existence of a non-Stein long $\C^2$. 
For the connection with Bedford's conjecture, see the survey   \cite{Abbondandolo2013} by Abbondandolo
et al. 

Another question that has been asked repeatedly over a long period of time 
is whether there exist infinitely many nonequivalent long $\C^n$'s for any or all $n>1$. 
Up to now, only two different long $\C^2$'s have been known, namely the standard $\C^2$ and 
a non-Stein long $\C^2$ constructed by Wold \cite{Wold2010}. In higher dimension $n>2$
one can get a few more examples by considering Cartesian products of long $\C^k$'s
for different values of $k$.  In this paper, we introduce new biholomorphic invariants
of a complex manifold, the {\em stable core} and the {\em strongly stable core} (see Definition \ref{def:SC}), 
which allow us to distinguish certain 
% FF: introduced "pairs of" per report A
pairs of long $\C^n$'s one from another.  In our opinion, this is the main new contribution of the paper
from the conceptual point of view.  
With the help of these invariants, we answer the above mentioned question 
affirmatively by proving the following result. 

%
% FF: Definition of a regular compact set; this is used in the def. of strongly stable core. (Report A)
%

Recall that a compact subset $B$ of a topological space $X$ is said to be {\em regular} if it is the closure of its interior,
$B=\overline{\mathring B}$.

%
%
%  MANIFOLDS X(B)
%
%
\begin{theorem}\label{th:XB}
Let $n>1$. To every regular compact polynomially convex set $B\subset \C^n$ 
we can associate a complex manifold $X(B)$, 
which is a long $\C^n$ containing a biholomorphic copy of $B$, 
such that every biholomorphic map $\Phi\colon X(B)\to X(C)$ between two such
manifolds takes $B$ onto $C$. In particular, for every holomorphic automorphism
$\Phi\in\Aut(X(B))$, the restriction $\Phi|_B$ is an automorphism of $B$. 
%
% FF: added statement, per Report B
%
We can choose $X(B)$ such that it has no nonconstant holomorphic or plurisubharmonic functions.
\end{theorem}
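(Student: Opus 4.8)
The plan is to realize $X(B)$ as an increasing union $X=\bigcup_{j\ge1}X_j$ of copies $X_j\cong\C^n$, with a fixed polynomially convex copy of $B$ sitting inside $X_1$, and to design the connecting biholomorphisms so that $B$ becomes exactly the strongly stable core of the resulting long $\C^n$. Since the strongly stable core is a biholomorphic invariant, the rigidity statement about $\Phi$ will then follow formally, and the last sentence will be obtained by folding a function-killing device into the same construction.

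The construction is inductive. Having built $X_j\cong\C^n$ together with a polynomially convex copy $B_j$ of $B$, I would define the inclusion $X_j\hookrightarrow X_{j+1}$ by a biholomorphism $\theta_j\colon\C^n\to\Omega_{j+1}$ onto a Fatou--Bieberbach domain $\Omega_{j+1}\subset\C^n=X_{j+1}$, chosen by the Forn{\ae}ss--Wermer--Wold mechanism recalled in the introduction so as to meet two competing demands: (i) the set $\theta_j(B_j)=:B_{j+1}$ is again polynomially convex in $X_{j+1}$, so that the hulls satisfy $\widehat{B}_{X_m}=B$ for all $m$ and $B$ is a strongly stable set; and (ii) $\Omega_{j+1}$ is non-Runge in such a way that the holomorphic hull of every compact set not contained in $B$ fails to remain relatively compact as $m\to\infty$. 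Requirement (i) gives $B\subset\mathrm{SSC}(X)$, while (ii) forces every stable hull into $B$, so that $\mathrm{SSC}(X)=B$. The regularity hypothesis $B=\overline{\mathring B}$ enters precisely here: the nonempty interior provides the geometric room to carry out the non-Runge surgery in the part of $\C^n$ lying away from $B$ without disturbing the polynomial convexity of $B_j$.

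For the rigidity, the point is that the strongly stable core is intrinsic: it is computed from holomorphic hulls along an exhaustion, any biholomorphism carries one exhaustion by copies of $\C^n$ to another while preserving hulls, and (this is the lemma underlying Definition \ref{def:SC}) the resulting core is independent of the chosen exhaustion. Hence any biholomorphism $\Phi\colon X(B)\to X(C)$ satisfies $\Phi(\mathrm{SSC}(X(B)))=\mathrm{SSC}(X(C))$, that is $\Phi(B)=C$. Taking $C=B$, every $\Phi\in\Aut(X(B))$ satisfies $\Phi(B)=B$, and since $\Phi$ is biholomorphic on a neighborhood of $B$, its restriction $\Phi|_B$ is an automorphism of $B$.

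Finally, to obtain a version of $X(B)$ with no nonconstant holomorphic or plurisubharmonic functions, I would build the function-killing device behind Theorem \ref{th:main} into the same induction: alongside (i)--(ii) I would fix a compact $K_0\supset B$ and arrange that its hulls $\widehat{K_0}_{X_m}$ eventually exhaust $X$. Then any plurisubharmonic $u$ on $X$ satisfies $u\le\sup_{K_0}u$ at every point, by the plurisubharmonic maximum principle on holomorphic hulls in each $X_m\cong\C^n$; since $u$ attains this bound on the compact $K_0$, the strong maximum principle forces $u$ to be constant, and applying this to $|f|$ and to $-\log|f|$ kills all holomorphic functions as well. The main obstacle is exactly the tension between this requirement and (i): the connecting maps must be wild enough to drag the hull of $K_0$ out to fill $X$, yet tame enough near $B$ to keep $\widehat{B}_{X_m}=B$. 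Reconciling the two---localizing all the non-Runge growth away from a neighborhood of $B$ at every step and propagating the quantitative estimates through the direct limit---is the technical heart of the argument.
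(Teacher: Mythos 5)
Your overall strategy coincides with the paper's: Theorem \ref{th:XB} is deduced formally from the existence of a long $\C^n$ whose strongly stable core is $B$ (Theorem \ref{th:SC}(a)), using the biholomorphic invariance and exhaustion-independence of the stable hull property (Lemma \ref{lem:SHP}), exactly as you outline in your third paragraph. The inductive scheme you describe --- Fatou--Bieberbach connecting maps that fix the polynomial convexity of $B_j$ while pushing other hulls out of the range --- is also the paper's (Lemma \ref{lem:mainstep2} and the recursion in Section \ref{sec:XB}). However, two specific points in your sketch are misstated in ways that matter.

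First, your requirement (ii), that the hull of \emph{every} compact set not contained in $B$ escapes to infinity, is not achievable and is not what is needed: the Wold process requires sets with nonempty interior, and a compact set $K$ with $K\not\subset B$ but $\mathring K\setminus B=\emptyset$ (e.g.\ $B$ union a point, or $B$ union an arc) cannot in general be prevented from having the stable hull property. This is precisely why Definition \ref{def:SC}(ii) only quantifies over compact sets with $\mathring K\setminus B\ne\emptyset$, and it is why $B$ must be \emph{regular}: regularity disambiguates the ``biggest set with SHP'' among sets differing by thin pieces. Your explanation that regularity provides ``geometric room'' for the surgery away from $B$ misattributes its role. The correct mechanism, which your sketch omits, is a density argument: one performs the Wold process at every stage on a growing family of shrinking balls centered at a countable dense subset of $X\setminus B$ (with each center revisited infinitely often and radii controlled as in condition $(3_k)$ of the paper), so that every compact set whose interior meets $X\setminus B$ contains one of these balls and inherits its escaping hulls. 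Second, for the function-killing step you propose to make the hulls $\wh{(K_0)}_{\Oscr(X_m)}$ \emph{exhaust} $X$; these hulls are compact and there is no justification that their union can be made all of $X$. The paper only arranges that they contain a countable \emph{dense} subset $\Gamma$ of $X\setminus\mathring B$, which bounds a plurisubharmonic $u$ above on a dense set; one then needs the extra step of upgrading this to a bound on all of each $X_m\cong\C^n$ (via approximation of $u$ from above by continuous psh functions) before applying Liouville's theorem. Your appeal to the strong maximum principle at an attained maximum on $K_0$ would collapse once the exhaustion claim is weakened to density. (Also, $-\log|f|$ is not plurisubharmonic; applying the psh result to $|f|$ alone already suffices.)
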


It follows that the manifold $X(B)$ can be biholomorphic to $X(C)$ only if $B$ is biholomorphic to $C$.
Our construction likely gives many nonequivalent long $\C^n$'s associated to the same set $B$. 
A more precise result is given by Theorem \ref{th:SC} below.
%; indeed, Theorem \ref{th:XB} is a corollary to part (a) in Theorem \ref{th:SC}.

By considering the special case when $B$ is the closure of a strongly pseudoconvex domain, 
Theorem \ref{th:XB} shows that the moduli space of long $\C^n$'s
contains the moduli space of germs of smooth strongly pseudoconvex real hypersurfaces 
in $\C^n$. This establishes a surprising connection between  long $\C^n$'s and the Cauchy-Riemann geometry.
It has been known since Poincar\'e's paper \cite{Poincare1907} 
that most pairs of smoothly bounded strongly pseudoconvex domains in $\C^n$ 
are not biholomorphic to each other, at least not by maps extending smoothly to
the closed domains. It was shown much later by C.\ Fefferman 
\cite{Fefferman1974} that the latter condition is is automatically fulfilled. 
(For elementary proofs of Fefferman's theorem, see Pinchuk and Khasanov \cite{PinchukKhasanov1989MS} 
and Forstneri\v c \cite{Forstneric1992EM}.) 
A complete set of local holomorphic invariants of a strongly pseudoconvex real-analytic hypersurface
is provided by the {\em Chern-Moser normal form}; see \cite{ChernMoser1974}. 
Most such domains have no holomorphic automorphisms other than the identity map. 
(For surveys of this topic, see  e.g.\ \cite{BaouendiEbenfeltRothschild1999} and 
\cite{Forstneric1993}.) Hence, Theorem  \ref{th:XB} implies the following corollary.

%
% FF: Corollary 1.3 has been restated per Report B
%

%
%
%  INFINITELY MANY LONG C^n's.
%
%
\begin{corollary}\label{cor:infinitely}
For every $n>1$ there is
%
%  FF: change in the next sentence, report A
%
%uncountably 
%
a continuum of pairwise nonequivalent long $\C^n$'s with no nonconstant holomorphic or plurisubharmonic 
functions and no nontrivial holomorphic automorphisms.  
\end{corollary}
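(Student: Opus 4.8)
The plan is to deduce Corollary \ref{cor:infinitely} from Theorem \ref{th:XB} by exhibiting a continuum (cardinality $\mathfrak{c}$) of regular compact polynomially convex sets $B\subset\C^n$ that are pairwise holomorphically inequivalent and that individually admit no nontrivial automorphisms, and then transporting these rigidity properties to the associated long $\C^n$'s $X(B)$. First I would fix the family: take $B$ to be the closure of a smoothly bounded strongly pseudoconvex domain in $\C^n$. As recalled in the excerpt, the local biholomorphic equivalence class of such a domain near a boundary point is governed by the Chern--Moser normal form \cite{ChernMoser1974}, and by Poincar\'e's observation \cite{Poincare1907} together with Fefferman's theorem \cite{Fefferman1974} (so that biholomorphisms extend smoothly to the boundary), a generic such domain has no automorphisms beyond the identity and no two generic ones are biholomorphic. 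The first concrete task is therefore to pin down a specific $\mathfrak{c}$-parameter family $\{B_t\}_{t\in T}$, $|T|=\mathfrak{c}$, of regular compact polynomially convex sets realizing distinct Chern--Moser normal forms, so that $t\neq t'$ forces $B_t\not\cong B_{t'}$ and each $\Aut(B_t)=\{\Id\}$.

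Next I would invoke Theorem \ref{th:XB} to produce for each $t$ a long $\C^n$ $X(B_t)$ that contains a biholomorphic copy of $B_t$, has no nonconstant holomorphic or plurisubharmonic functions, and has the functorial property that any biholomorphism $\Phi\colon X(B_t)\to X(B_{t'})$ restricts to a biholomorphism $B_t\to B_{t'}$. The inequivalence of the long $\C^n$'s is then immediate: if $X(B_t)\cong X(B_{t'})$ the restriction would give $B_t\cong B_{t'}$, forcing $t=t'$ by the choice of family. Likewise, for automorphisms, Theorem \ref{th:XB} guarantees that every $\Phi\in\Aut(X(B_t))$ restricts to $\Phi|_{B_t}\in\Aut(B_t)=\{\Id\}$; I would then argue that a holomorphic automorphism of a long $\C^n$ that is the identity on the copy of $B_t$ (a set with nonempty interior) must be the identity, by the identity principle on the connected manifold $X(B_t)$. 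This yields $\Aut(X(B_t))=\{\Id\}$, and the absence of nonconstant holomorphic and plurisubharmonic functions is inherited directly from the last sentence of Theorem \ref{th:XB}.

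The main obstacle is the first step: verifying that one can select a full continuum of regular compact polynomially convex sets that are simultaneously pairwise inequivalent \emph{and} individually rigid. The CR-geometric folklore that ``most'' strongly pseudoconvex domains have these properties must be turned into an explicit $\mathfrak{c}$-sized family, which requires a genuine (not merely generic) construction—for instance, perturbing the unit ball by attaching a real-analytic boundary bump whose Chern--Moser invariants depend injectively on a real parameter, while controlling that the perturbation keeps the set polynomially convex and regular. A secondary, milder point is the transfer of rigidity through the restriction map: one must ensure that $\Phi|_{B_t}$ is a genuine biholomorphism of $B_t$ (extending smoothly to the boundary) so that Poincar\'e--Fefferman rigidity applies, rather than merely a continuous or interior map. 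I expect both of these to be addressable by citing the standard CR-normalization results collected in the surveys \cite{BaouendiEbenfeltRothschild1999,Forstneric1993}, with the genuine labor concentrated in producing the explicit continuum of pairwise distinct normal forms.
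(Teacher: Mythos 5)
Your proposal is correct and follows essentially the same route as the paper, which likewise deduces the corollary from Theorem \ref{th:XB} by taking $B$ to be the closure of a strongly pseudoconvex domain and invoking Poincar\'e--Fefferman--Chern--Moser rigidity to get a continuum of pairwise inequivalent, automorphism-free sets $B$; the paper leaves the existence of such a continuum to the cited CR-geometry literature rather than constructing it explicitly. Your additional observation that $\Phi|_{B}=\Id$ forces $\Phi=\Id$ on all of $X(B)$ by the identity principle is exactly the (implicit) step needed to pass from $\Aut(B)=\{\Id\}$ to $\Aut(X(B))=\{\Id\}$.
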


%
%  FF: I removed this sentence since the same is said elsewhere.
%
%Theorem \ref{th:XB} is a corollary to part (a) in Theorem \ref{th:SC} which is the main result of the paper.  The construction of the manifolds $X(B)$ is inspired by  Wold's constructions in \cite{Wold2008,Wold2010}. 

We now describe the new biholomorphic invariants alluded to above, the {\em stable core} and the {\em strongly stable core}
of a complex manifold. Their definition  is based on the following property which a compact set in a 
complex manifold may or may not have. Given a pair of compact sets $K\subset L$ in a complex manifold $X$, we write
\begin{equation}\label{eq:hull}
	\wh K_{\Oscr(L)}=\{x\in L: |f(x)|\le \sup_K |f| \ \text{for all $f\in\Oscr(L)$}\},
\end{equation}
where $\Oscr(L)$ is the algebra of holomorphic functions on neighborhoods of $L$.

%
%
%  DEFINITION OF STABLE HULL PROPERTY
%
%
\begin{definition}[The Stable Hull Property] \label{def:SHP}
A compact set $K$ in a complex manifold $X$ has the  {\em stable hull property}, SHP, if there exists an exhaustion
$K_1\subset K_2\subset \cdots \subset \bigcup_{j=1}^\infty K_j=X$ by compact sets 
such that $K\subset K_1$, $K_j\subset \mathring K_{j+1}$ for every $j\in\N$, 
and the increasing sequence of hulls $\wh K_{\Oscr(K_j)}$ stabilizes, i.e., there is a $j_0\in\N$ such that 
\begin{equation}\label{eq:stable}
  	{\wh K}_{\Oscr(K_j)}= {\wh K}_{\Oscr(K_{j_0})}\quad  \text{for all $j\ge j_0$}.
\end{equation}
\end{definition}

Obviously, SHP is a biholomorphically invariant property: if a compact set $K \subset X$ satisfies 
condition \eqref{eq:stable} with respect to some exhaustion $(K_j)_{j\in\N}$ of $X$,
then for any biholomorphic map $F\colon X\to Y$ the set $F(K) \subset Y$ satisfies 
\eqref{eq:stable} with respect to the exhaustion $L_j=F(K_j)$ of $Y$.
What is less obvious, but needed to make this condition useful, is its 
independence of the choice of the exhaustion; see Lemma \ref{lem:SHP}.

%
%   STABLE CORE
%
\begin{definition} \label{def:SC}
Let $X$ be a complex manifold. 
\begin{itemize}
\item[\rm (i)] The {\em stable core} of $X$, denoted $SC(X)$, 
is the open set consisting of all points $x\in X$ which admit a compact neighborhood 
$K\subset X$ with the stable hull property. 
\vspace{1mm}
\item[\rm (ii)] 
%
% FF: inserted "regular". As referee A  pointed out, the definition is ambiguous otherwise.
%
A regular compact set $B\subset X$ is called the {\em strongly stable core} of $X$, denoted $SSC(X)$, 
if $B$ has the stable hull property, but no compact set $K \subset X$ with
$\mathring K \setminus B \ne \emptyset$ has the stable hull property.
\end{itemize}
\end{definition}

Clearly, the stable core always exists and is a biholomorphic invariant,
in the sense that any biholomorphic map $X\to Y$ maps $SC(X)$ onto $SC(Y)$. 
In particular, every holomorphic automorphism of $X$ maps the stable core $SC(X)$ onto itself. 
%
% FF: the following paragraph has been modified per report A.
%
The strongly stable core $SSC(X)$ need not exist in general; if it does, then its interior equals the stable core $SC(X)$
and $SSC(X)=\overline{SC(X)}$.  In part (ii) we must restrict attention to {\em regular} compact sets since otherwise 
the definition would be ambiguous.
% Indeed, if a compact set $B$ has the stable hull property then for any point $p\notin B$ the set $B\cup\{p\}$ also has it; the same is true for the union of $B$ with a smooth arc attached to $B$ at a single point. 

% The following second main result of the paper shows  that these invariants are interesting and useful for long $\C^n$'s.

\begin{theorem} \label{th:SC}
Let $n>1$.
\begin{itemize}
\item[\rm (a)]  For every regular compact  polynomially convex set $B\subset \C^n$ (i.e., $B=\overline{\mathring B}$) 
there exists a long $\C^n$, $X(B)$, which admits no nonconstant plurisubharmonic functions and
whose strongly stable core equals $B$: $SSC(X(B))=B$.
\vspace{1mm}
\item[\rm (b)] For every open set $U\subset\C^n$ there exists a long $\C^n$, $X$, 
which admits no nonconstant holomorphic functions and
satisfies $SC(X)\subset U$ and $\overline U=\overline{SC(X)}$.
\end{itemize}
\end{theorem}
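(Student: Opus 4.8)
The plan is to derive both parts from a single inductive construction---the hull-manipulation scheme already used to prove Theorems \ref{th:main} and \ref{th:XB}---by keeping precise track of how polynomial hulls grow from one chart to the next. The engine is a one-step embedding lemma producing a Fatou--Bieberbach-type biholomorphism $\phi\colon\C^n\to\phi(\C^n)\subset\C^n$ which is Runge on a neighborhood of a designated \emph{core} (so that the core's hull is left unchanged) but strongly non-Runge away from it (so that the hull of any set meeting the complement of the core is pushed outward). Iterating such maps yields the long $\C^n$, $X=\bigcup_j X_j$ with $X_j\cong\C^n$, and the freedom in choosing the maps is what lets us simultaneously (i) pin down the hull of the core, (ii) force the hulls of all other sets to escape, and (iii) kill the global function algebra as in Theorem \ref{th:main}. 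Throughout, Lemma \ref{lem:SHP} is invoked so that the stable hull property and its failure may be tested on the convenient exhaustion coming from the construction rather than on an arbitrary one.

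For part (a), I would start from the manifold $X(B)$ furnished by Theorem \ref{th:XB}, which already carries the embedded copy of $B$, admits no nonconstant holomorphic or plurisubharmonic functions, and whose defining embeddings can be taken Runge near $B$. First I would check that $B$ has the stable hull property: taking the exhaustion $K_j$ by large coordinate balls in the charts $X_{m(j)}$ with $m(j)\to\infty$, the Runge-near-$B$ property keeps $B$ polynomially convex in every chart, so $\wh B_{\Oscr(K_j)}=B$ for all $j$ and the sequence of hulls is constant. Hence $\mathring B\subset SC(X(B))$. Next I would show that no compact set $K$ with $\mathring K\setminus B\neq\emptyset$ has the stable hull property. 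Such a $K$ contains a small closed ball $\overline D$ centered at a point $p\notin B$; by monotonicity of hulls, $\wh K_{\Oscr(K_j)}\supset\wh{\overline D}_{\Oscr(K_j)}$, so it suffices to see that the hull of $\overline D$ escapes, i.e.\ grows without bound as $j\to\infty$. This is exactly the non-Runge mechanism that destroys plurisubharmonic functions: the embeddings are arranged so that any compact set meeting $X\setminus B$ has its hull exhaust the charts. Therefore every such $K$ fails the property, and since $B$ is regular and polynomially convex, Definition \ref{def:SC}(ii) gives $SSC(X(B))=B$.

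For part (b), since we only need to kill \emph{holomorphic} (not plurisubharmonic) functions, the weaker requirement leaves more room, and I would aim to produce a stable core that is merely dense in $U$. First choose a countable family of closed balls $B_i\subset U$, each regular and polynomially convex, with $\bigcup_i\mathring B_i$ dense in $U$ (for instance all rational balls contained in $U$). Then run a diagonalized version of the construction in which, at stage $k$, the maps are chosen Runge on a neighborhood of $B_1\cup\cdots\cup B_k$ while the hull of the $k$-th test set poking out of $U$ is pushed outward and the $k$-th candidate holomorphic function is forced to be bounded, hence constant on each chart. In the limit every $\mathring B_i$ consists of stable points, so $SC(X)\supset\bigcup_i\mathring B_i$ is dense in $U$, while every compact set with interior outside $\overline U$ is destabilized, so $SC(X)\subset U$. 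Consequently $\overline{SC(X)}=\overline U$.

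The main obstacle is the tension between pinning the core's hull and forcing everything else to escape: the one-step lemma must be simultaneously Runge on the (growing) protected core and strongly non-Runge on its complement, and the induction must be bookkept---via a diagonal enumeration in (b), and via the exhaustion-independence of the stable hull property in both parts---so that in the limit every set with interior outside the intended core is eventually destabilized while the core's polynomial convexity is preserved at every step. Verifying that these two opposing requirements can be met at each stage, and that the escaping hulls indeed grow unboundedly rather than stabilizing at some intermediate compact set, is the crux of the argument; the function-killing of Theorem \ref{th:main} then follows from the same escaping hulls.
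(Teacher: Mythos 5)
Your overall engine---iterated Fatou--Bieberbach maps that are close to the identity (hence hull-preserving) near a protected core while the Wold process destabilizes sets away from it, with Lemma \ref{lem:SHP} used to test SHP on the convenient exhaustion---is indeed the paper's strategy. But your part (a) is logically circular: in the paper, Theorem \ref{th:XB} is \emph{deduced from} Theorem \ref{th:SC}(a), not the other way around, so you cannot ``start from the manifold $X(B)$ furnished by Theorem \ref{th:XB}.'' Moreover, even taken as a black box, the statement of Theorem \ref{th:XB} only says that biholomorphisms preserve $B$ and that the function algebras are trivial; it says nothing about the stable hull property of $B$ or about the failure of SHP for larger compact sets, which is exactly what $SSC(X(B))=B$ requires. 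When you then assert that ``the embeddings are arranged so that any compact set meeting $X\setminus B$ has its hull exhaust the charts,'' you are assuming the conclusion. The actual mechanism is the content of the recursion built on Lemma \ref{lem:mainstep2}: one performs the Wold process simultaneously on finitely many balls $\B(b^l_{k,i},r_k)$ centered (in the moving coordinates) at a countable set that is dense in $X\setminus B$, with each point revisited infinitely often and with the nesting condition $(3_k)$ forcing $(\phi_{k-1}\circ\cdots\circ\phi_i)^{-1}(\B(b^l_{k,i},r_k))\subset \B(a^l_i,r_i/2^k)$. Only this shrinking-and-density bookkeeping guarantees that an \emph{arbitrary} compact $K$ with $\mathring K\setminus B\neq\emptyset$ contains one of the destabilized sets $C^l_{k,i}$, whence its hull escapes every $X_k$ by monotonicity. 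You flag this as ``the crux'' but do not supply it, and the reduction to Theorem \ref{th:XB} does not supply it either.

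In part (b) there are two further problems. First, you propose to force ``the $k$-th candidate holomorphic function'' to be bounded, but there is no enumeration of $\Oscr(X)$ available before $X$ is constructed; the paper instead swallows a countable set $\Gamma$, dense in $X\setminus U$, into the $\Oscr(X)$-hull of a fixed compact set, which bounds \emph{every} $f\in\Oscr(X)$ on $\overline\Gamma=X\setminus U$ at once. Second, boundedness on $X\setminus U$ does not by itself yield constancy: if $\overline U$ is compact one invokes the maximum principle, and otherwise one must arrange that $X\setminus\overline U$ contains a Fatou--Bieberbach domain (e.g.\ by the choice of $\phi_1$) so that Liouville plus the identity principle apply. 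Your sketch of the protected dense family of balls $B_i\subset U$ and the destabilization of test sets outside $\overline U$ is correct in outline and matches the paper, but without these two repairs the ``no nonconstant holomorphic functions'' clause of part (b) is not established.
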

 
%
% FF: the next sentence has been added.
%
In Theorem \ref{th:SC} we have identified the sets $B,U \subset \C^n$ with their
images in the long $\C^n$, $X=\bigcup_{k=1}^\infty X_k$, by identifying $\C^n$ with 
the first domain $X_1\subset X$.

Assuming Theorem \ref{th:SC}, we now prove Theorem \ref{th:XB}.

%
%  PROOF OF THEOREM X(B)
%
%

\begin{proof}[Proof of Theorem \ref{th:XB}]
Let $B$ be a regular compact polynomially convex set in $\C^n$ for some $n>1$. By Theorem \ref{th:SC}
there exists a long $\C^n$, $X=X(B)$, whose strongly stable core is $B$.
Assume that $F \in \Aut(X)$. Then, $F(B)$ has SHP (see Definition \ref{def:SHP}).
Since $B$ is the biggest regular compact subset of $X$ with SHP (see (ii) in Definition \ref{def:SC}), we have that 
$\Phi(B)\subset B$. Applying the same argument to the inverse automorphism $\Phi^{-1}\in\Aut(X)$ gives 
$\Phi^{-1}(B)\subset B$, and hence $B\subset \Phi(B)$. Both properties together 
imply that $\Phi(B)=B$, and hence $\Phi|_B\in \Aut(B)$. 

In the same way, we see that a biholomorphic map $X(B)\to X(C)$
between two long $\C^n$'s, furnished by part (a) in Theorem \ref{th:SC}, maps
$B$ biholomorphically onto $C$. Hence, if $B$ is not biholomorphic to 
$C$, then $X(B)$ is not biholomorphic to $X(C)$.
\end{proof}

Theorem \ref{th:SC} is proved in Section \ref{sec:XB}.
We construct  manifolds with these properties by improving the recursive procedure devised by 
Wold \cite{Wold2008,Wold2010}. The following key ingredient was introduced in \cite{Wold2008}; 
it will henceforth be called the {\em \Wprocess} (see Remark \ref{rem:Wprocess}):
 
Given a compact holomorphically convex set $L\subset \C^*\times \C^{n-1}$ with nonempty interior, 
there is a holomorphic automorphism  $\psi\in\Aut(\C^*\times \C^{n-1})$ 
such that the polynomial hull  $\wh{\psi(L)}$ of the set $\psi(L)$ intersects the hyperplane $\{0\}\times \C^{n-1}$. 
By precomposing $\psi$ with a suitably chosen Fatou--Bieberbach map $\theta\colon \C^n\hra \C^*\times \C^{n-1}$, 
we obtain a Fatou--Bieberbach map $\phi=\psi\circ\theta\colon\C^n\hra\C^n$ such that, for a given polynomially convex set 
$K\subset\C^n$ with nonempty interior, we have that $\wh{\phi(K)}\not\subset\phi(\C^n)$.

At every step of the recursion we perform the  \Wprocess\  simultaneously on finitely many pairwise disjoint
compact sets $K_1,\ldots, K_m$ in the complement of the given regular polynomially convex set $B\subset \C^n$, chosen such
that $\bigcup_{j=1}^m K_j\cup B$ is polynomially convex, thereby ensuring that polynomial hulls 
of their images $\phi(K_j)$ escape from the range of the injective holomorphic map 
$\phi\colon \C^n\hra \C^n$ constructed in the recursive step. At the same time, we ensure 
that $\phi$ is close to the identity map on a neighborhood of $B$, and hence the image $\phi(B)$ remains polynomially convex. 
In practice, the sets $K_j$ will be small pairwise disjoint closed balls  in the complement of $B$ 
whose number will increase during the process. We devise the process so that every point in a 
certain countable dense set $A=\{a_j\}_{j=1}^\infty \subset X\setminus B$ 
is the center of a decreasing sequence of balls whose $\Oscr(X_k)$-hulls escape
from each compact set in $X$; hence none of these balls has the stable hull property. 
This implies that $B$ is  the strongly stable core of $X$. 
% and proves part (a) of Theorem \ref{th:SC}.

To prove part (b),  we modify the recursion by introducing a new small ball 
$B'\subset U\setminus B$ at every stage. Thus, the set $B$ acquires additional 
connected components during the recursive process.
The sequence of added balls $B_l$ is chosen such that their union is dense in the given 
open subset $U\subset \C^n$, while the sequence of sets $K_j$ on which the \Wprocess\ is performed
densely fills the complement $X\setminus \overline U$. It follows that the stable core of the 
limit manifold $X=\bigcup_{k=1}^\infty X_k$ is contained in $U$ and is everywhere dense in $U$.

By combining the technique used in the proof of Theorem \ref{th:main} (see Section \ref{sec:proof1})
with those in \cite[proof of Theorem 1.1]{Forstneric2012}, one can easily obtain the following 
result for holomorphic families of long $\C^n$'s.
(Compare with \cite[Theorem 1.1]{Forstneric2012}.) We leave out the details.

%
%
%   HOLOMORPHIC FAMILIES OF LONG C^n's
%
%
\begin{theorem}\label{th:main2}
Let $Y$ be a Stein manifold, and let $A$ and $B$ be disjoint finite or countable sets in $Y$. 
For every integer $n>1$ there exists a complex manifold $X$ of dimension $\dim Y +n$ and a 
surjective holomorphic submersion $\pi\colon X\to Y$ with the following properties:
\begin{itemize}
\item the fiber $X_{y}=\pi^{-1}(y)$ over any point $y\in Y$ is a long $\C^n$;
\item $X_{y}$ is biholomorphic to $\C^n$ if $y\in A$;
\item $X_{y}$ does not admit any nonconstant plurisubharmonic function if $y\in B$.
\end{itemize}
If the base $Y$ is $\C^p$, then $X$ may be chosen to be a long $\C^{p+n}$.
\end{theorem}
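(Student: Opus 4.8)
The plan is to follow the hint in the paper: combine the construction proving Theorem \ref{th:main} with the parametrized (holomorphic family) technique from \cite{Forstneric2012}. The target object is a holomorphic submersion $\pi\colon X\to Y$ over a Stein base whose fibers are long $\C^n$'s, with prescribed behavior over the two countable sets $A$ (fibers biholomorphic to $\C^n$) and $B$ (fibers with no nonconstant plurisubharmonic functions). First I would fix a smooth exhaustion $Y=\bigcup_k Y_k$ by compact $\Oscr(Y)$-convex (Stein-compact) sets and arrange the countable sets as $A=\{a_j\}$, $B=\{b_j\}$. Because $Y$ is Stein, I can build $X$ as an increasing union of total spaces $X_k$ of trivial $\C^n$-bundles $Y\times\C^n$ glued by fiber-preserving biholomorphisms, so that each $X_k$ restricted over any compact piece of $Y$ is biholomorphic to (a domain in) $Y\times\C^n$; the fiberwise union over each point $y$ then yields a long $\C^n$. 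The essential point is that the gluing maps are chosen as holomorphic families, parametrized by $y\in Y$, of the Fatou--Bieberbach/Wold-process maps used in the single-fiber argument.

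The key steps, in order, are as follows. (1) Recall the \Wprocess\ and the recursive scheme from the proof of Theorem \ref{th:main}: at each stage one produces a Fatou--Bieberbach map $\phi\colon\C^n\hra\C^n$ pushing polynomial hulls of a growing finite collection of small balls outside the image $\phi(\C^n)$, while staying close to the identity on a controlled set. (2) Make all of these maps depend holomorphically on the base parameter $y\in Y$; this is exactly the content of \cite[Theorem 1.1]{Forstneric2012}, which provides holomorphic families of such automorphisms and Fatou--Bieberbach embeddings with parametric estimates. (3) Over the points $a_j\in A$, arrange the $j$-th stage of the recursion to be trivial (the map is the identity in the fiber over $a_j$), so that the fiber $X_{a_j}$ is the standard $\C^n$; since $A$ is discrete in $Y$ one can interpolate, making the family agree with the identity to the required order at each $a_j$. (4) Over the points $b_j\in B$, run the full escaping-hull recursion in the fiber so that $X_{b_j}$ inherits the property of having no nonconstant plurisubharmonic (hence no nonconstant holomorphic) function, exactly as in Theorem \ref{th:main}. (5) Verify that the resulting $X=\bigcup_k X_k$ is a manifold with the submersion $\pi$ onto $Y$ and that every fiber is a long $\C^n$.

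The parametric interpolation in steps (3)--(4) is where I would do the bookkeeping carefully: I must simultaneously impose that the family equals the identity near each $a_j$ and performs a prescribed nontrivial push near each $b_j$, while keeping the convergence of the infinite composition locally uniform over $Y$. Since $A$ and $B$ are disjoint and discrete (countable in a Stein manifold, so closed and locally finite after passing to exhaustion pieces), one can separate their influence using Cartan's Theorems A and B to split the required interpolation data, and then apply the Oka--Weil--Cartan approximation/interpolation machinery underlying \cite{Forstneric2012}. The final claim, that $X$ may be taken to be a long $\C^{p+n}$ when $Y=\C^p$, follows because in that case each $X_k$ is biholomorphic to $\C^p\times\C^n=\C^{p+n}$: the base factor is already $\C^p$ and the fiberwise maps are close enough to the identity (and Fatou--Bieberbach onto their images) that the total-space domains form an increasing sequence of biholomorphic copies of $\C^{p+n}$.

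I expect the main obstacle to be the parametric version of the \Wprocess, that is, ensuring that the hull-escaping phenomenon holds \emph{uniformly} in the parameter over the relevant fibers while simultaneously keeping the fibers over $A$ trivial. Controlling polynomial hulls is delicate and not an open condition, so the quantitative estimates from \cite[Theorem 1.1]{Forstneric2012} must be invoked to guarantee that the escape persists after taking the locally uniform limit of the infinite sequence of fiberwise compositions; this is precisely why the authors write that the details can be \emph{obtained} from the combination of the two cited arguments rather than spelled out here.
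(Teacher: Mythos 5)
The paper gives no detailed proof of Theorem \ref{th:main2}: it states only that the result follows by combining the construction of Theorem \ref{th:main} with the parametric machinery of \cite[proof of Theorem 1.1]{Forstneric2012}, and your overall plan is exactly that, so in outline you are on the paper's intended route.

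There is, however, one concrete error in your step (3) and in the paragraph following it: you assert that $A$ and $B$ are ``discrete \ldots (countable in a Stein manifold, so closed and locally finite after passing to exhaustion pieces)''. This is false; a countable subset of a Stein manifold need not be closed or locally finite, and the paper explicitly emphasizes that $A$ and $B$ may each be chosen \emph{everywhere dense} in $Y$. Consequently the strategy of interpolating via Cartan's theorems so that the family ``agrees with the identity to the required order at each $a_j$'' at every stage cannot work: if $A$ is dense and the fiberwise maps were the identity over all of $A$, they would be the identity everywhere, destroying the construction over $B$. The correct bookkeeping, as in \cite{Forstneric2012}, is to treat only the first $k$ points of $A$ and of $B$ at stage $k$: over $a_j$ one arranges that from stage $j$ onward the fiberwise Fatou--Bieberbach maps approximate the identity on larger and larger balls (equivalently, have Runge images in the fiber), so that the fiber $X_{a_j}$ is biholomorphic to $\C^n$ by the limit argument of Dixon--Esterle used in Proposition \ref{prop:nonRunge}, or by \cite[Theorem 1.2]{Wold2010}; over $b_j$ one performs the \Wprocess\ fiberwise from stage $j$ onward so that the hulls of the relevant balls escape every $X_k\cap X_{b_j}$. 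Nothing needs to be exactly the identity over $A$, and no global interpolation along $A\cup B$ is required --- only finitely many conditions are imposed at each stage, with the conditions at $a_j$ and $b_j$ persisting for all later stages. With this correction the rest of your outline (holomorphic dependence of the gluing maps on $y$, the submersion structure, and the identification $X_k\cong\C^{p+n}$ when $Y=\C^p$) is consistent with what the authors intend.
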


Note that one or both of the sets $A$ and $B$ in Theorem \ref{th:main2} may be chosen 
everywhere dense in $Y$. The same result holds if $A$ is a union of at most countably many 
closed complex subvarieties of $Y$ and the set $B$ is countable.

%
%
%  OPEN PROBLEMS
%
%
Several interesting questions on long $\C^n$'s remain open; we record some of them.

\begin{problem}
(A) Does there exist a long $\C^2$ which admits a nonconstant holomorphic function, but is not Stein?

\noindent  (B) To what extent is it possible to prescribe the algebra $\Oscr(X)$ of a long $\C^n$?

\noindent (C) Does there exist a long $\C^n$ for any $n>1$ which is a Stein manifold different from $\C^n$?

%
% FF: new problem (D) per report A.
%
\noindent (D) Does there exist a long $\C^n$ without nonconstant meromorphic functions?

\noindent (E) What can be said about the (non) existence of complex anaytic subvarieties of positive 
dimension in non-Stein long $\C^n$'s? 
%
% (c) nontrivial holomorphic vector bundles?
%
% FF: perhaps the next problem is not reasonable and we could delete it.
%
%\noindent (E)
%Which open subsets $U\subset \C^n$ are the stable core of some long $\C^n$?
\end{problem}

In dimensions $n>2$, an affirmative answer to Problem (A) is provided 
by the product $X=\C^p\times X^{n-p}$ for any $p=1,\ldots, n-2$, where $X^{n-p}$ is a long 
$\C^{n-p}$ without nonconstant holomorphic functions, furnished by Theorem \ref{th:main}. 
Note that $\Oscr(\C^p\times X^{n-p})\cong \Oscr(\C^p)$ is the algebra of functions coming from the base.
Indeed, any example furnished by Theorem \ref{th:main2}, with the base $Y=\C^p$
$(p\ge 1)$ and $B$ dense in $\C^p$, is of this kind.

%
% FF: new paragraph
%
Regarding question (D), note that the Fatou-Bieberbach maps $\phi_k\colon \C^n\hra\C^n$ used in our constructions
have rationally convex images, in the sense that for any compact polynomially convex set $K\subset \C^n$ its
image $\phi_k(K)$ is a rationally convex set in $\C^{n}$; this gives rise to nontrivial 
meromorphic functions on the resulting long $\C^n$'s.

Since every long $\C^n$ is an {\em Oka manifold} (cf.\ \cite{Larusson2010} and
\cite[Proposition 5.5.6, p.\ 200]{Forstneric2011}),
the results of this paper also contribute to our understanding of the class of Oka manifolds,
that is, manifolds which are the most natural targets for holomorphic maps from Stein manifolds
and reduced Stein spaces. 

Note that every long $\C^n$ is a topological cell according to a theorem of Brown \cite{Brown1961}. 
%
% FF: The following part is new; per report A.
%
Furthermore, it was shown by Wold \cite[Theorem 1.2]{Wold2010} that, 
if $X= \bigcup_{k=1}^\infty X_k$ is a long $\C^n$ and $(X_k,X_{k+1})$ is a Runge pair for every $k\in\N$, 
then $X$ is biholomorphic to $\C^n$. Since the Runge property always holds in the $\Cscr^\infty$ category,
i.e. for smooth diffeomorphisms of Euclidean spaces, his proof can be adjusted to show that every long
$\C^n$ is also diffeomorphic to $\R^{2n}$. Hence, Theorems \ref{th:XB} and \ref{th:SC} imply the following corollary.

\begin{corollary} \label{cor:Oka}
For every $n>1$ there exists a continuum of pairwise nonequivalent Oka manifolds 
of complex dimension $n$ which are all diffeomorphic to $\R^{2n}$. 
\end{corollary}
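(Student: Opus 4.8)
The plan is to assemble three ingredients, two of which are already available and need only be quoted, while the third requires a short additional argument. First I would invoke Corollary \ref{cor:infinitely}: by applying Theorem \ref{th:XB} to a continuum of mutually holomorphically inequivalent regular compact polynomially convex sets $B\subset\C^n$ (for instance, the closures of generic smoothly bounded strongly pseudoconvex domains, distinguished by their Chern--Moser normal forms), one obtains a family $\{X(B)\}$, of cardinality continuum, of pairwise biholomorphically nonequivalent long $\C^n$'s. Since every long $\C^n$ is an Oka manifold by \cite{Larusson2010} (see also \cite[Proposition 5.5.6, p.\ 200]{Forstneric2011}), each $X(B)$ is Oka, and the biholomorphic nonequivalence of the $X(B)$ is precisely the nonequivalence of Oka manifolds demanded by the statement.

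It then remains to verify that each $X(B)$ is diffeomorphic to $\R^{2n}$, and this is where the real work lies. Here I would adapt Wold's Theorem 1.2 from \cite{Wold2010} to the $\Cscr^\infty$ category. The cited holomorphic statement shows that a long $\C^n$, $X=\bigcup_k X_k$, all of whose pairs $(X_k,X_{k+1})$ are Runge, is biholomorphic to $\C^n$; its proof is a recursive approximation scheme in which the transition maps are corrected by automorphisms of $\C^n$ so that the partial composites converge to a biholomorphism onto $\C^n$. The decisive observation is that the smooth analogue of the Runge condition holds automatically, since smooth approximation and extension carry no cohomological obstruction: any smooth diffeomorphism of $\R^{2n}$ onto an open subset can be approximated, in the $\Cscr^\infty$ topology on compact sets, by diffeomorphisms of all of $\R^{2n}$. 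Fixing diffeomorphisms $\Phi_k\colon X_k\to\R^{2n}$, one then corrects $\Phi_{k+1}$ by a global diffeomorphism of $\R^{2n}$ so that it agrees with $\Phi_k$, up to a small error with all derivatives, on a large compact subset of $X_{k-1}$; passing to the limit of the $\Phi_k$ yields a diffeomorphism $X\to\R^{2n}$.

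The main obstacle is exactly this last step: running Wold's recursion in the smooth category and arranging that the corrected maps converge to a diffeomorphism rather than merely to a smooth map. One must ensure that both the direct maps and their inverses converge in the $\Cscr^\infty$ topology on compacts and that the limit is a bijection, which requires balancing the exhaustion against the freedom in the correcting diffeomorphisms so that each $\Phi_k$ is eventually locked in on every compact set. The underlying topological content is not in question, since Brown's theorem \cite{Brown1961} already guarantees that $X$ is a cell; the entire difficulty is in upgrading this to a genuine \emph{smooth} diffeomorphism through the approximation argument above. Granting this, the three ingredients together produce a continuum of pairwise nonequivalent Oka manifolds of complex dimension $n$, all diffeomorphic to $\R^{2n}$, as claimed.
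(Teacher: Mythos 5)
Your proposal follows exactly the paper's route: quote the Oka property of long $\C^n$'s from \cite{Larusson2010} and \cite[Proposition 5.5.6]{Forstneric2011}, obtain the continuum of pairwise nonequivalent examples from Theorems \ref{th:XB} and \ref{th:SC} via Chern--Moser invariants, and adapt Wold's Runge-pair theorem \cite[Theorem 1.2]{Wold2010} to the $\Cscr^\infty$ category (where the Runge condition is automatic) to get a diffeomorphism onto $\R^{2n}$. This matches the paper's argument in every essential respect, with your discussion of the convergence of the correcting diffeomorphisms being a reasonable elaboration of the step the paper leaves to the reader.
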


In Section \ref{sec:exhaustion} we show that  $\C^n$ for any $n>1$ can also be represented 
as an increasing union of non-Runge Fatou--Bieberbach domains.

%%%%%%%%%%
%%%%%%%%%%
%%%%%%%%%%
%%%%%%%%%%   PRELIMINARIES
%%%%%%%%%%
%%%%%%%%%%

\section{Preliminaries}
\label{sec:prel}

In this section, we introduce the notation and recall the basic ingredients. % that will be used in the paper.

We denote by $\Oscr(X)$ the algebra of all holomorphic functions on a complex manifold $X$.
For a compact set $K\subset X$, $\Oscr(K)$ stands for the algebra of functions
holomorphic in open neighborhoods of $K$ (in the sense of germs along $K$). 
The {\em $\Oscr(X)$-convex hull} of $K$ is 
\[
	\wh K_{\Oscr(X)}= \{x\in X : |f(x)| \le \sup_K |f| \ \ \text{for all $f\in \Oscr(X)$} \}.
\] 
When $X=\C^n$, the set $\wh K=\wh K_{\Oscr(\C^n)}$ is the {\em polynomial hull} of $X$.
If $\wh K_{\Oscr(X)}=K$, we say that $K$ is {\em holomorphically convex in $X$};
if $X=\C^n$ then $K$ is polynomially convex.
More generally, if $K\subset L$ are compact sets in $X$, we define the 
hull $\wh K_{\Oscr(L)}$ by \eqref{eq:hull}.

Given a point $p\in\C^n$, we denote by $\B(p;r)$ the closed ball  of radius $r$ centered at $p$. 

We shall frequently use the following basic result; see e.g.\ Stout \cite{Stout1971,Stout2007} for the first part 
(which is a simple application of E.\ Kallin's lemma) and \cite{Forstneric1986} for the second part.

\begin{lemma}\label{lem:stability}
Assume that $B\subset \C^n$ is a compact polynomially convex set.
For any finite set $p_1,\ldots,p_m\in\C^n\setminus B$ and for all sufficiently
small numbers $r_1>0,\ldots,r_m>0$, the set $\bigcup_{j=1}^m \B(p_j,r_j) \cup B$
is polynomially convex. Furthermore, if $B$ is the closure of a bounded
strongly pseudoconvex domain with $\C^2$ boundary, then any sufficiently
$\Cscr^2$-small deformation of $B$ in $\C^n$ is still polynomially convex.
\end{lemma}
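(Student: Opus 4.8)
The plan is to treat the two assertions separately. For the first, I would argue by induction on $m$, the inductive step reducing to the following claim: if $K\subset\C^n$ is compact and polynomially convex and $p\in\C^n\setminus K$, then $K\cup\B(p;r)$ is polynomially convex for all sufficiently small $r>0$. Granting this, one adds the balls $\B(p_j;r_j)$ to $B$ one at a time, shrinking the radii at each stage (keeping them small enough that the balls stay pairwise disjoint and disjoint from $B$), so that after $m$ steps the entire union is polynomially convex. The radii produced this way are bounded below by a positive constant depending only on $B$ and on the configuration $p_1,\dots,p_m$, which is exactly the ``sufficiently small'' of the statement.

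For the single-ball claim I would invoke E.\ Kallin's lemma. Since $p\notin K=\wh K$, polynomial convexity furnishes a polynomial $q$ with $q(p)=1$ and $\sup_K|q|<1$; replacing $q$ by a high power $q^N$ we may assume $\sup_K|q|<\delta$ for any prescribed $\delta>0$, while still $q(p)=1$. For $r$ small, continuity gives $q(\B(p;r))\subset\{|w-1|<1/4\}$, whereas $q(K)\subset\{|w|\le\delta\}$; for $\delta<3/4$ these image sets are disjoint compact subsets of $\C$ lying in disjoint discs, so their union has connected complement and is polynomially convex in $\C$. Kallin's lemma then yields $\wh{K\cup\B(p;r)}=\wh K\cup\wh{\B(p;r)}=K\cup\B(p;r)$, as desired. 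The point requiring care is the verification of Kallin's hypotheses on the images in $\C$ (that each image set, not merely their union, behaves well), and this is where the smallness of $r$ enters.

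For the second assertion I would first note that strong pseudoconvexity is an open condition in the $\Cscr^2$ topology: if $\rho$ is a $\Cscr^2$ defining function for $B$ that is strictly plurisubharmonic near $\di B$, then any $\Cscr^2$-small deformation $B'$ has a defining function $\rho'$ that is $\Cscr^2$-close to $\rho$ and still strictly plurisubharmonic near $\di B'$, so $B'$ is again the closure of a bounded strongly pseudoconvex domain. The substantive issue is global polynomial convexity. Here I would use that $B$, being polynomially convex, admits a continuous plurisubharmonic exhaustion $u$ of $\C^n$ with $B=\{u\le 0\}$ that is strictly plurisubharmonic and $\Cscr^2$ near $\di B$. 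I would then build the analogous exhaustion for $B'$ by keeping $u$ unchanged far from the boundary (where $B'=B$) and splicing in the local strictly plurisubharmonic function adapted to $\rho'$ near $\di B'$ via a regularized maximum; because strict plurisubharmonicity is stable and the data are $\Cscr^2$-close, the result is again a plurisubharmonic exhaustion $u'$ of $\C^n$ with $\{u'\le 0\}=B'$. Sublevel sets of plurisubharmonic exhaustions of $\C^n$ are $\Oscr(\C^n)$-convex, hence polynomially convex, which gives the claim.

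The main obstacle I anticipate lies in the second part: producing the global plurisubharmonic exhaustion for $B$ from polynomial convexity and, more delicately, performing the gluing so that the spliced function is genuinely plurisubharmonic, is an exhaustion, and has sublevel set exactly $B'$ with no spurious components appearing away from $\di B'$. A more geometric route avoids the gluing: one argues by contradiction, using that for a strongly pseudoconvex set the hull $\wh{B'}\setminus B'$, when nonempty, is a purely one-dimensional analytic variety with boundary in $\di B'$; a sequence of counterexamples $B'_k\to B$ would then produce such varieties converging, under uniform area and boundary bounds, to a nontrivial variety in $\wh B\setminus\mathring B=\di B$, which is impossible. In either approach the delicate step is the same in spirit: controlling what happens in a fixed neighborhood of $\di B$ under the perturbation and ruling out hull components forming there.
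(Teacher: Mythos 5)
The paper does not prove this lemma at all: it is quoted with references, namely Stout's books (via E.\ Kallin's lemma) for the first assertion and \cite{Forstneric1986} for the second, so any comparison is against those standard arguments rather than a proof in the text. Your first part is exactly the intended Kallin-type argument: separate $K$ from $p$ by a polynomial $q$ with $q(p)=1$ and $\sup_K|q|$ small, observe that for small $r$ the images $q(K)$ and $q(\B(p;r))$ lie in disjoint closed discs whose union is polynomially convex, and conclude via Kallin's lemma (equivalently, the Shilov idempotent theorem applied to the splitting of $\wh{K\cup\B(p;r)}$) that the hull is the union of the two hulls. Your observation that the thresholds obtained by adding the balls one at a time are monotone in the earlier radii, so that the statement really holds for all $r_j$ below fixed thresholds, is the right point to make. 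This is the argument the paper is pointing to.

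For the second part your primary route (a continuous plurisubharmonic exhaustion $u\ge 0$ of $\C^n$ with $u^{-1}(0)=B$, spliced near $\di B$ with a strictly plurisubharmonic defining function $\rho'$ for the perturbed domain) can be made to work, but be aware that the splicing is more delicate than a single regularized maximum of $u$ and $c\rho'$: to get the sublevel set \emph{exactly} equal to $B'$ you must shift $u$ (work with $u-\delta$), choose the constants so that the regularized maximum coincides \emph{identically} with $c\rho'$ in a band around $\di B'$ (this needs $u<\delta-\eta$ on that band, i.e.\ the perturbation small relative to $\delta$ and $\eta$), and coincides identically with $u-\delta$ near both edges of the collar; otherwise the regularized maximum only controls the sublevel set up to an $\eta$-error and you prove polynomial convexity of a slightly shrunk set. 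The genuine flaw is in your proposed ``more geometric route'': it is \emph{not} true that $\wh{B'}\setminus B'$ is a one-dimensional analytic variety for the closure $B'$ of a strongly pseudoconvex domain --- no such structure theorem exists (indeed Stolzenberg's example, used elsewhere in this very paper, is a hull without analytic structure, and for a strongly pseudoconvex neighborhood of a torus the excess of the hull is two-dimensional), so the compactness/convergence argument built on it collapses. The correct substitute, and the standard proof, replaces analytic structure by Rossi's local maximum modulus principle: since $B$ is polynomially convex, $\wh{B'}$ lies in a prescribed small neighborhood $W$ of $B$ once $B'$ is close to $B$; then $W\setminus \mathring B'$ lies in the collar where $\rho'$ is strictly plurisubharmonic, a maximum of $\rho'$ over $\wh{B'}$ cannot occur at a point of $\wh{B'}\setminus B'$ by the local maximum principle, hence $\rho'\le 0$ on $\wh{B'}$ and $\wh{B'}=B'$. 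This version avoids the gluing entirely and is what you should use in place of the variety argument.
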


The key ingredient in our proofs will be the main result of the Anders\'en--Lempert theory 
%
%  FF: change in the next line (report A)
%
as formulated by Forstneri\v c and Rosay \cite[Theorem 1.1]{ForstnericRosay1993};
see Theorem \ref{th:AL} below. We shall use it not only for $\C^n$, but also for $X=\C^*\times \C^{n-1}$.
The result holds for any Stein manifold which enjoys the following 
{\em density property} introduced by Varolin \cite{Varolin2001}. (See also
\cite[Definition 4.10.1]{Forstneric2011}.)

\begin{definition}\label{def:DP}
A complex manifold $X$ enjoys the (holomorphic) {\em density property} if
every holomorphic vector field on $X$ can be approximated, uniformly on compacts,
by Lie combinations (sums and commutators) of $\C$-complete  holomorphic vector fields on $X$.
\end{definition}

By Anders\'en and Lempert (see \cite{Andersen1990,AndersenLempert1992}), the complex
Euclidean space $\C^n$ for $n>1$ enjoys the  density property. More generally,
Varolin proved that any complex manifold $X=(\C^*)^k\times \C^l$ 
with $k+l\ge 2$ and $l\ge 1$ enjoys the density property (cf.\ \cite{Varolin2001}). For surveys of this subject,
see for instance \cite[Chapter 4]{Forstneric2011} and \cite{KalimanKutzschebauch2011}.

%
% FF: Removed the next sentence since this is already said earlier on the page.
%
% We state explicitly  the following main result of Anders\'en--Lempert theory since it will be used repeatedly in the sequel. 

%
%
%  FORSTNERIC AND ROSAY FOR STEIN MANIFOLDS WITH DENSITY PROPERTY
%
%
\begin{theorem} \label{th:AL}
Let $X$ be a Stein manifold with the density property, and let
\[
	\Phi_t\colon \Omega_0 \longrightarrow \Omega_t=\Phi_t(\Omega_0)\subset X, 
	\quad t\in [0,1]
\] 
be a smooth isotopy  of biholomorphic maps of $\Omega_0$ onto Runge domains 
$\Omega_t\subset X$ such that $\Phi_0=\mathrm{Id}_{\Omega_0}$.  
Then, the map $\Phi_1\colon \Omega_0\to\Omega_1$ can be approximated uniformly on compacts
in $\Omega_0$ by holomorphic automorphisms of $X$.
\end{theorem}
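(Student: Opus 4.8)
The plan is to reduce the finite-time statement to an infinitesimal one, to convert the infinitesimal generators into $\C$-complete fields by means of the density property, and then to reassemble a global automorphism through a time-discretized flow.

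First I would differentiate the isotopy. Since $t\mapsto\Phi_t$ is a smooth isotopy of biholomorphisms with $\Phi_0=\Id_{\Omega_0}$, I set
\[
	V_t = \Bigl(\tfrac{\partial}{\partial s}\Big|_{s=t}\Phi_s\Bigr)\circ\Phi_t^{-1},
\]
a smooth family of holomorphic vector fields on $\Omega_t=\Phi_t(\Omega_0)$, so that $\Phi_t$ is the flow of $V_t$ issuing from the identity, $\partial_t\Phi_t=V_t\circ\Phi_t$. Fixing a compact set $K\subset\Omega_0$ and a tolerance $\epsilon>0$, the goal becomes producing an $F\in\Aut(X)$ that is $\epsilon$-close to $\Phi_1$ uniformly on $K$.

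Next, for each fixed $t$ I would approximate $V_t$ by Lie combinations of complete fields. Since $X$ is Stein, its tangent bundle is generated by finitely many global holomorphic vector fields (Cartan's Theorem A), and since $\Omega_t$ is Runge in $X$ the field $V_t$ can be approximated, uniformly on compacts of $\Omega_t$, by a globally defined holomorphic field $\wt V_t$ on $X$. The density property then exhibits $\wt V_t$ as a uniform-on-compacts limit of Lie combinations of $\C$-complete fields. Because the time-$s$ flow of a complete field is a genuine element of $\Aut(X)$, and because the short-time flow of a sum or of a commutator is approximated by finite compositions of such flows (Lie product and commutator formulas), the time-$s$ flow of $V_t$ for small $s$ is approximable, uniformly on any prescribed compact, by automorphisms of $X$.

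Finally I would discretize in time. Partitioning $[0,1]$ into $m$ equal subintervals and freezing the field on each, the transition map $\Phi_{t_k}\circ\Phi_{t_{k-1}}^{-1}$ agrees with the time-$(1/m)$ flow of the frozen field $V_{t_{k-1}}$ up to an error of order $O(1/m^2)$, using uniform $\Cscr^1$ bounds on $V_t$ over $t\in[0,1]$. Replacing each frozen flow by an approximating automorphism $\alpha_k$ from the previous step and telescoping (using $\Phi_{t_0}=\Id$) yields
\[
	\Phi_1 \approx \alpha_m\circ\cdots\circ\alpha_1\in\Aut(X).
\]
The hard part will be the error bookkeeping in this composition: I must keep every approximate flow and every partial composition defined on a fixed neighborhood of $K$ throughout, and choose the per-step approximation quality so that the errors propagated through $m$ successive compositions still sum to less than $\epsilon$. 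This requires Lipschitz bounds on the factors that are uniform in $m$, so that a Gronwall-type telescoping estimate controls the accumulated error as $m\to\infty$; this estimate, rather than any single ingredient, is the technical core of the argument.
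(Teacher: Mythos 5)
Your proposal is correct and follows essentially the same route as the proof the paper relies on: the paper does not reprove Theorem \ref{th:AL} but cites Forstneri\v c--Rosay and Ritter, whose argument is exactly your scheme of differentiating the isotopy, approximating the time-dependent field by global fields via the Runge property and Cartan's theorems, invoking the density property together with the Euler/Lie product formulas, and controlling the accumulated error in a time-discretized composition. The only point worth making explicit in a full write-up is that the compact set must be replaced by its $\Oscr(X)$-convex hull (which stays inside $\Omega_t$ precisely because $\Omega_t$ is Runge) so that the approximation of $V_t$ by global fields is available on a fixed neighborhood at every step.
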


This is a version of \cite[Theorem 1.1]{ForstnericRosay1993} in which $\C^n$ is replaced by an 
arbitrary Stein manifold with the density property (see also \cite[Theorem 4.10.6]{Forstneric2011}).
For a detailed proof of Theorem \ref{th:AL}, see \cite[Theorem 1.1]{ForstnericRosay1993} for the case
$X=\C^n$ and \cite[Theorem 8]{Ritter2013} for the general case (which follows the one in
\cite{ForstnericRosay1993} essentially verbatim).

%
%
%   PROOF OF THEOREM 1.1
%
%
\section{Construction of a long $\C^n$ without holomorphic functions}
\label{sec:proof1}

In this section, we prove Theorem \ref{th:main}.
We begin by recalling the general construction of a long $\C^n$ (cf.\  \cite{Wold2010} or
\cite[Section 4.20]{Forstneric2011}).

Recall that a {\em Fatou--Bieberbach map} is an injective holomorphic map $\phi \colon \C^n\hra \C^n$ 
such that $\phi(\C^n)\subsetneq \C^n$; the image $\phi(\C^n)$ of such map is called a {\em Fatou--Bieberbach domain}.
Every complex manifold $X$ which is a long $\C^n$ is determined by a sequence of Fatou--Bieberbach 
maps $\phi_k \colon \C^n\to \C^n$ $(k=1,2,3,\ldots)$. 
The elements of $X$ are represented by infinite strings
$x=(x_i,x_{i+1},\ldots)$, where $i\in\N$ and for every $k=i,i+1,\ldots$ we have 
$x_k\in \C^n$ and $x_{k+1}=\phi_k(x_k)$.
Another string $y=(y_j,y_{j+1},\ldots)$ determines the same element of $X$ if and only if
one of the following possibilities holds:
\begin{itemize}
\item $i=j$ and $x_i=y_i$ (and hence $x_k=y_k$ for all $k> i$);
\vspace{1mm}
\item $i<j$ and $y_j=\phi_{j-1}\circ \cdots \circ \phi_i(x_i)$;
\vspace{1mm}
\item $j<i$ and $x_i=\phi_{i-1}\circ \cdots \circ \phi_j(y_j)$.
\end{itemize}
For each $k\in\N$, let $\psi_k \colon \C^n\hra X$ be the injective map
sending $z\in \C^n$ to the equivalence class of the string $(z,\phi_k(z),\phi_{k+1}(\phi_k(z)),\ldots)$. 
Set $X_k=\psi_k(\C^n)$ % Then $X_k\subset X_{k+1}$. 
and let $\iota_k\colon X_k \hra X_{k+1}$ be the inclusion map induced by 
$(x_k,x_{k+1},x_{k+2},\ldots) \mapsto (x_{k+1},x_{k+2},\ldots)$. 
Then, 
\begin{equation}\label{eq:comm}
	\iota_k \circ \psi_k = \psi_{k+1}\circ \phi_k, \qquad k=1,2,\ldots. 
\end{equation}

Recall that a compact set $L$ in a complex manifold $X$ is said to be {\em holomorphically contractible}
if there exist a neighborhood $U\subset X$ of $L$ and a smooth $1$-parameter family of injective holomorphic maps
$F_t\colon U\to U$ $(t\in [0,1))$ such that $F_0$ is the identity map on $U$, $F_t(L)\subset L$ for every 
$t\in [0,1]$, and $\lim_{t\to 1} F_t$ is a constant map $L \mapsto p \in L$.

The first part of the following lemma is the key ingredient in the construction
of the sequence $(\phi_k)_{k\in \N}$ determining a long $\C^n$ as in Theorem \ref{th:main}.
The same construction gives the second part which we include for future applications.
We shall write $\C^*=\C\setminus\{0\}$.

%
%
%   MAIN LEMMA
%
%
\begin{lemma}\label{lem:mainstep}
Let $K$ be a compact set with nonempty interior in $\C^n$ for some $n>1$.
For every point $a\in \C^n$ there exists an injective holomorphic map $\phi\colon \C^n\hra\C^n$ such that  
the  polynomial hull of the set $\phi(K)$ contains the point $\phi(a)$.
More generally, if $L\subset \C^n$ is a compact holomorphically contractible set disjoint from $K$ 
such that $K\cup L$ is polynomially convex, then there exists an injective holomorphic map 
$\phi\colon \C^n\hra\C^n$ such that $\phi(L)\subset \wh{\phi(K)}$
and $\wh{\phi(K)} \setminus \phi(\C^n)\ne \emptyset$.
\end{lemma}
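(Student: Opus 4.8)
The plan is to produce $\phi$ as a composition $\psi\circ\theta$ furnished by the \Wprocess, and to superimpose on that construction the extra requirement that the growing hull swallow $\phi(L)$. First I record the reductions for the first assertion. If $a\in\wh K$ (in particular if $a\in K$), then $\phi=\mathrm{Id}$ works, since $\phi(a)=a\in\wh K=\wh{\phi(K)}$; so assume $a\notin\wh K$. Fixing a closed ball $\B(c,\rho)\subset\mathring K$, we have $\wh{\phi(\B(c,\rho))}\subset\wh{\phi(K)}$ for every holomorphic $\phi$, so it is enough to treat $K=\B(c,\rho)$. By Lemma \ref{lem:stability} the set $\B(c,\rho)\cup\B(a,r)$ is polynomially convex for all small $r>0$, and $\B(a,r)$ is holomorphically contractible (contract affinely to its center); hence the first assertion is the special case $L=\B(a,r)$ of the second. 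I therefore concentrate on the second assertion.

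The \Wprocess\ provides a Runge Fatou--Bieberbach map $\theta\colon\C^n\hra\C^*\times\C^{n-1}=\C^n\setminus\Sigma$, where $\Sigma=\{0\}\times\C^{n-1}$, and an automorphism $\psi\in\Aut(\C^*\times\C^{n-1})$ such that, setting $\phi=\psi\circ\theta$, the polynomial hull $\wh{\phi(K)}$ (computed in $\C^n$) meets $\Sigma$; here one first arranges $\theta(K\cup L)$ to be holomorphically convex in $\C^*\times\C^{n-1}$ with $\theta(K)$ of nonempty interior, which is possible because $\theta$ is Runge and $K\cup L$ is polynomially convex. Since $\psi$ is an automorphism of $\C^*\times\C^{n-1}$ and $\theta(\C^n)\subset\C^*\times\C^{n-1}$, the image $\phi(\C^n)\subset\C^*\times\C^{n-1}$ avoids $\Sigma$, whereas $\wh{\phi(K)}$ meets $\Sigma$; this yields $\wh{\phi(K)}\setminus\phi(\C^n)\ne\emptyset$ immediately.

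It remains to secure $\phi(L)\subset\wh{\phi(K)}$, and this is the heart of the matter. The guiding observation is that the polynomial hull in $\C^n$ is strictly larger than the $\Oscr(\C^*\times\C^{n-1})$-convex hull, because polynomials cannot imitate functions singular along $\Sigma$ such as $1/z$; the winding performed by $\psi$ is designed so that the resulting excess consists precisely of points that polynomials cannot separate from $\psi(\theta(K))$, which is exactly the reason $\wh{\phi(K)}$ reaches $\Sigma$. My plan is to position $\theta(L)$ so that the very same winding carries $\psi(\theta(L))$ into this polynomial excess. Here the holomorphic contractibility of $L$ enters: combined with Theorem \ref{th:AL}, it yields an automorphism of $\C^n$ that is close to the identity on $K$ while concentrating $L$ into an arbitrarily small ball about a point $p\in L$; since automorphisms of $\C^n$ preserve polynomial convexity, the union stays polynomially convex. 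With $L$ thus concentrated, I use the freedom in the choice of $\theta$ to place $\theta(p)$ in the region that $\psi$ sweeps into the hull, so that $\psi(\theta(L))\subset\wh{\phi(K)}$. The main obstacle is exactly this coordination: the automorphism $\psi$ is produced by the \Wprocess\ from $\theta(K)$ and is not under direct control, so driving $\wh{\phi(K)}$ onto $\Sigma$ and engulfing the prescribed set $\phi(L)$ at the same time---while keeping $\theta(K)\cup\theta(L)$ holomorphically convex and Runge at every intermediate stage, so that both Theorem \ref{th:AL} and the \Wprocess\ remain valid---forces a single simultaneous construction rather than two independent applications. Once this is achieved, $\phi=\psi\circ\theta$ satisfies both conclusions, and specializing $L=\B(a,r)$ recovers the first assertion.
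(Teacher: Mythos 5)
Your reductions are sound: the first assertion does follow from the second by taking $L=\B(a,r)$ with $r>0$ small (after discarding the trivial case $a\in\wh K$ and shrinking $K$ to a closed ball in its interior, using Lemma \ref{lem:stability}), and the conclusion $\wh{\phi(K)}\setminus\phi(\C^n)\ne\emptyset$ is correctly extracted from the fact that $\phi(\C^n)$ misses $\Sigma=\{0\}\times\C^{n-1}$ while $\wh{\phi(K)}$ meets it. But the heart of the lemma --- forcing $\phi(L)\subset\wh{\phi(K)}$ --- is not actually proved. You propose to ``use the freedom in the choice of $\theta$ to place $\theta(p)$ in the region that $\psi$ sweeps into the hull,'' and then concede that coordinating this with the \Wprocess\ is an unresolved obstacle requiring ``a single simultaneous construction.'' That coordination is exactly the step that needs an argument, and your text stops at naming the difficulty rather than resolving it.

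The missing idea is that no such coordination is needed, because the excess of the hull is an \emph{open} set that is \emph{stable} under small perturbations of $\phi(K)$, so the two requirements can be decoupled into successive, independent applications of Theorem \ref{th:AL}. In the paper's construction one first builds $\tilde\phi=\psi^{-1}\circ\theta$ so that Stolzenberg's set $M$ lies in the interior of $K':=\tilde\phi(K)$; consequently $\wh{K'}$ contains a full neighborhood $V$ of $\wh M$ (hence of a point of $\Sigma$), with $\overline V\cap K'=\emptyset$. One then applies Theorem \ref{th:AL} a second time, to an isotopy in $\C^*\times\C^{n-1}$ that is the identity near $K'$, squeezes the holomorphically contractible set $L':=\tilde\phi(L)$ almost to a point, and transports it into $V$, keeping the union with $K'$ holomorphically convex throughout; this yields $\tau\in\Aut(\C^*\times\C^{n-1})$ with $\tau\approx\Id$ on $K'$ and $\tau(L')\subset V$. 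Since $\tau$ is close to the identity on $K'$, the set $M$ is still contained in $\tau(K')$ and $\wh{\tau(K')}$ still contains $V$, whence $\tau(L')\subset V\subset\wh{\tau(K')}$, and $\phi=\tau\circ\tilde\phi$ does the job. Incidentally, your heuristic about ``winding'' misdescribes the mechanism: the automorphism does not wind $\theta(K)$ around $\Sigma$; it stretches $\theta(K)$ over the fixed Stolzenberg set $M$ (two totally real discs, holomorphically convex in $\C^*\times\C^{n-1}$, whose polynomial hull hits $\Sigma$), and all of the polynomial-hull excess comes from $\wh M$.
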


\begin{proof}
To simplify the notation, we consider the case $n=2$; it will be obvious that the same proof
applies in any dimension $n\ge 2$. We shall follow Wold's construction from \cite{Wold2008,Wold2010}
up to a certain point, adding a new twist at the end.

Let $M$ be a compact set in $\C^*\times \C$ enjoying the following properties:
\begin{enumerate}
\item $M$ is a disjoint union of two smooth, embedded, totally real discs;
\vspace{1mm}
\item $M$ is holomorphically convex in $\C^*\times \C$; 
\vspace{1mm}
\item the polynomial hull $\wh M$ of $M$ of contains the origin $(0,0)\in \C^2$.
\end{enumerate}
A set $M$ with these properties was constructed by Stolzenberg \cite{Stolzenberg1966};
it  has been reproduced in \cite[pp.\ 392--396]{Stout1971}, in \cite[Sec.\ 2]{Wold2008}, 
and in \cite[Section 4.20]{Forstneric2011}.

Choose a Fatou--Bieberbach map $\theta\colon \C^2\hra \C^*\times \C$ whose image $\theta(\C^2)$ is Runge in $\C^2$.
For example, we may take the basin of an attracting fixed point of a holomorphic automorphism
of $\C^2$ which fixes $\{0\}\times \C$ (cf.\ Rosay and Rudin \cite{RosayRudin1988}
for explicit examples). Replacing the set $K$ by its polynomial hull $\wh K$, we may assume that $K$ is polynomially convex.
Since $\theta(\C^2)$ is Runge in $\C^2$, the set  $\theta(K)$ is also polynomially convex, and hence 
$\Oscr(\C^*\times \C)$-convex. 
% Choose a closed ball $B\subset \C^*\times \C$ contained in the interior of $\theta(K)$. 
By \cite[Lemma 3.2]{Wold2008}, there exists a holomorphic automorphism $\psi\in \Aut(\C^*\times \C)$ such that 
\[
	\psi(M)\subset \theta(\mathring K). 
\]
The construction of such automorphism $\psi$ uses Theorem \ref{th:AL} applied with the manifold $X=\C^*\times \C$.
We include a brief outline. 

By shrinking each of the two discs in $M$ within themselves until they become very small 
and then translating them into $\mathring K$ within $\C^*\times \C$, 
we find an isotopy of diffeomorphisms $h_t\colon M=M_0\to M_t\subset \C^*\times \C$ $(t\in [0,1])$,
where each $M_t =h_t(M)$ is a totally real $\Oscr(\C^*\times \C)$-convex submanifold of $\C^*\times \C$, 
such that  $M_1\subset \mathring K$. Since $\C^*\times \C$ has the holomorphic density property (see Varolin \cite{Varolin2001}),
each diffeomorphism $h_t$  can be approximated uniformly on $M$ (and even in the smooth topology on $M$) 
by holomorphic automorphisms of $\C^*\times \C$. This is done in two steps.
First, we approximate $h_t$ by a smooth isotopy of biholomorphic maps 
$f_t\colon U_0 \to U_t$ from a neighborhood $U_0$ of $M_0$ onto a neighborhood $U_t$ of 
$M_t$; this is done as in \cite{ForstnericLow1997}.
Since the submanifold $M_t$ is totally real and $\Oscr(\C^*\times \C)$-convex for each $t\in[0,1]$, 
we can arrange that the neighborhood $U_t$ is Runge in $\C^*\times  \C$ for each $t\in [0,1]$. 
Hence, Theorem \ref{th:AL} furnishes an automorphism $\psi\in\Aut(\C^*\times \C)$
which approximates the diffeomorphism $h_1\colon M\to M_1$ sufficiently closely such
that $\psi(M)\subset B$. 
It follows that the injective holomorphic map $\tilde \phi=\psi^{-1}\circ\theta\colon\C^2\hra \C^*\times \C$
satisfies $M \subset \tilde\phi(\mathring K)$. Note that $K':=\tilde\phi(K)$ is a compact
$\Oscr(\C^*\times \C)$-convex set which contains $M$ in its interior. Therefore, its polynomial hull 
$\wh K'$ contains a neighborhood of $\wh M$, and hence a neighborhood $V\subset \C^2$ 
of the origin $(0,0)\in \C^2$. We may assume that $\overline V\cap K'=\emptyset$.

Let $a\in \C^2$. If $\tilde \phi(a)\in \wh K'$, then we take $\phi=\tilde\phi$ and we 
are done. If this is not the case, we choose a point $a'\in V \cap (\C^*\times \C)$ and
apply Theorem \ref{th:AL} to find a holomorphic automorphism $\tau\in\Aut(\C^*\times \C)$ 
which is close to the identity map on $K'$ and satisfies $\tau(\tilde \phi(a))=a'$. 
Such $\tau$ exists since the union of $K'$
with a single point of $\C^*\times \C$ is $\Oscr(\C^*\times \C)$-convex, so it suffices 
to apply the cited result to an isotopy of injective holomorphic maps which is the identity near $K'$
and which moves $\tilde \phi(a)$ to $a'$ in $\C^*\times \C\setminus K'$.
Assuming that $\tau$ is sufficiently close to the identity map on $K'$,
we have $M\subset \tau(K')$, and hence $a'\in \wh M\subset \widehat{\tau(K')}$. 
Clearly, the map $\phi=\tau\circ\tilde\phi\colon \C^2\to \C^*\times \C$ satisfies
$\phi(a) \in\wh{\phi(K)}$. This proves  the first part of the lemma.

The second part is proved similarly.
Since the set $L':=\theta(L)\subset \C^*\times \C$ is holomorphically contractible and 
$K'\cup L'$ is $\Oscr(\C^*\times \C)$-convex, there exists an automorphism 
$\tau\in \Aut(\C^*\times \C)$ which approximates the identity map on $K'$
and satisfies $\tau(L') \subset V\cap (\C^*\times\C)$. (To find such $\tau$, we apply Theorem \ref{th:AL}
to a smooth isotopy $h_t\colon U\to h_t(U)\subset \C^*\times \C$ $(t\in [0,1])$ 
of injective holomorphic maps on a small neighborhood $U\subset \C^*\times \C$ 
of $K'\cup L'$ such that $h_0$ is the identity on $U$, 
$h_t$ is the identity near $K'$ for every $t\in[0,1]$, 
and $h_1(L')\subset V$. On the set $L'$, $h_t$ first squeezes $L'$ within itself almost 
to a point and then moves it to a position within $V$.
Clearly, such an isotopy can be found such that $h_t(K'\cup L')= K_t\cup h_t(L')$ is
$\Oscr(\C^*\times \C)$-convex for all $t\in [0,1]$.)
If $\tau$ is sufficiently close to the identity on $K'$, then the polynomial hull $\widehat{\tau(K')}$
still contains $V$, and hence $\tau(L')\subset V\subset \widehat{\tau(K')}$.
The map $\phi=\tau\circ\tilde\phi\colon \C^2\to \C^*\times \C$ 
then satisfies the desired conclusion.
\end{proof}

\begin{proof}[Proof of Theorem \ref{th:main}]
Pick a compact set $K\subset \C^n$ with nonempty interior and a countable dense sequence
$\{a_j\}_{j\in \N}$ in $\C^n$. Set $K_1=\wh K$. Lemma \ref{lem:mainstep} furnishes an  injective 
holomorphic map $\phi_1\colon \C^n\to \C^n$ such that 
\begin{equation}\label{eq:tohull}
	\phi_1(a_1)\in \wh{\phi_1(K_1)}  =: K_2.
\end{equation} 
Applying Lemma \ref{lem:mainstep} to the set $K_2$ and the point $\phi_1(a_2)\in \C^n$ gives
an injective holomorphic map $\phi_2\colon \C^n\hra \C^n$ such that
\[
	\phi_2(\phi_1(a_2)) \in \wh{\phi_2(K_2)} =: K_3. 
\]
From the first step we also have $\phi_1(a_1)\in K_2$, and hence 
$\phi_2(\phi_1(a_1)) \in K_3$.

Continuing inductively, we obtain a sequence $\phi_j\colon \C^n\hra \C^n$ of injective holomorphic maps  
for $j=1,2,\ldots$ such that, setting $\Phi_k=\phi_k\circ\cdots\circ\phi_1\colon\C^n\hra \C^n$, we have 
\begin{equation}\label{eq:crux}
	\Phi_k(a_j)\in \wh{\Phi_k(K)}\quad \text{for all $j=1,\ldots,k$}.
\end{equation}

In the limit manifold $X=\bigcup_{k=1}^\infty X_k$ 
(the long $\C^n$) determined by the sequence $(\phi_k)_{k=1}^\infty$, 
the $\Oscr(X)$-hull of the initial set $K \subset \C^n=X_1\subset X$ clearly contains the
set $\Phi_k(K) \subset X_{k+1}$ for each $k=1,2,\ldots$. 
(We have identified the $k$-th copy of $\C^n$ in the sequence with its image 
$\psi_k(\C^n)=X_k \subset X$.) It follows from 
\eqref{eq:crux} that the hull $\wh K_{\Oscr(X)}$ contains the set $\{a_j\}_{j\in \N}\subset \C^n =X_1$.
Since this set is everywhere dense in $\C^n$, every holomorphic function on $X$
is bounded on $X_1=\C^n$, and hence constant. By the identity principle, it follows that the
function is constant on all of $X$.

The same argument shows that  the plurisubharmonic hull $\wh K_{\Psh(X)}$ of $K$
contains the set $A_1:=\{a_j\}_{j\in \N}\subset \C^n \cong X_1$, and hence
every plurisubharmonic function $u\in \Psh(X)$ is bounded from above on $A_1$.
Since $A_1$ is the dense in $X_1$,  it follows that $u$ is bounded from above on 
$X_1$. (This is obvious if $u$ is continuous; the general case follows by observing
that $u$ can be approximated from above, uniformly on compacts in $X_1\cong \C^n$, 
by continuous plurisubharmonic functions.) It follows from Liouville's theorem for plurisubharmonic 
that $u$ is constant on $X_1$.
In order to ensure that $u$ is constant on each copy $X_k\cong \C^n$ $(k\in\N)$ in the 
given exhaustion of $X$, we modify the construction as follows. 
After choosing the first Fatou--Bieberbach map $\phi_1\colon \C^n\hookrightarrow \C^n$
such that $\phi_1(a_1)\in \wh{\phi_1(K)}$ (see \eqref{eq:tohull}), we choose a countable dense set 
$A'_2=\{a'_{2,1},a'_{2,2},\ldots\}$ in $\C^n\setminus \phi_1(\C^n)$
and set $A_2=\phi_1(A_1) \cup A'_2$ to get a countable dense set in $X_2\cong \C^n$.
Next, we find a Fatou--Bieberbach map $\phi_2\colon \C^n\hra \C^n$
such that the first two points $\phi_1(a_1),\phi_1(a_2)$ of the set $\phi_1(A_1)$, and also the first 
point $a'_{2,1}$ of $A'_2$, are mapped by $\phi_2$ into the polynomial hull of 
$\phi_2(\phi_1(K))$. We continue inductively. At the $k$-th stage of the construction
we have  chosen a Fatou--Bieberbach map $\phi_k\colon\C^n\hra \C^n$, 
and we take $A_{k+1}=\phi_k(A_k)\cup A'_{k+1}$ where  $A'_{k+1}$ is a countable dense set
in $\C^n\setminus \phi_k(A_k)$. In the manifold $X$ we thus get an increasing sequence 
$A_1\subset A_2\subset \cdots$ whose union $A:=\bigcup_{k=1}^\infty A_k$ is dense in $X$
and such that every point of $A$ ends up in the hull $\wh K_{\Oscr(X_k)}=\wh K_{\Psh(X_k)}$ 
for all sufficiently big $k\in\N$.
(See the proof of Theorem \ref{th:SC} for more details in a related context.)
Hence, the plurisubharmonic hull $\wh K_{\Psh(X)}$ contains the countable dense subset $A$ of $X$. 
We conclude as before that any plurisubharmonic function on $X$ is bounded on every 
$X_k\cong \C^n$, and hence constant.
\end{proof}

%%%%%%%%%%
%%%%%%%%%%   	W-PROCESS
%%%%%%%%%%

\begin{remark}[\Wprocess]  \label{rem:Wprocess}
The key ingredient in the proof of Lemma \ref{lem:mainstep} is the method,
introduced by E.\ F.\ Wold in \cite{Wold2008}, of stretching the image
of a compact set in $\C^*\times \C^{n-1}$ by an automorphism of $\C^*\times \C^{n-1}$ so that its image
swallows a compact set $M$ whose polynomial hull in $\C^n$ intersects the hyperplane $\{0\}\times \C^{n-1}$. 
This will henceforth be called the {\em \Wprocess}. A recursive application of this method, 
possibly at several places simultaneously
and with additional approximation of the identity map on a certain other compact polynomially
convex set (cf.\ Lemma \ref{lem:mainstep2}), causes the hulls of the respective
sets to reach out of all domains $X_k\cong \C^n$ in the exhaustion of $X$.
\end{remark}

%%%%%%%%%%
%%%%%%%%%%
%%%%%%%%%%

\section{Construction of manifolds $X(B)$}
\label{sec:XB}

In this section, we construct long $\C^n$'s satisfying Theorems \ref{th:XB} and \ref{th:SC}. 

We begin by showing that the stable hull property of a compact set in a complex manifold $X$ 
(see Definition \ref{def:SHP}) is independent of the choice of exhaustion of $X$ by compact sets.

%
%
%  SHP IS A BIHOLOMORPHIC INVARIANT
%
%
\begin{lemma}\label{lem:SHP}
Let $X=\bigcup_{j=1}^\infty K_j$, where $K_j\subset \mathring K_{j+1}$ is a sequence 
of compact sets. Let $B$ be a compact set in $X$. Assume that there exists an integer $j_0\in\N$ 
such that $B\subset K_{j_0}$ and
\begin{equation}\label{eq:stable1}
  	{\wh B}_{\Oscr(K_j)}= {\wh B}_{\Oscr(K_{j_0})}\quad  \text{for all $j\ge j_0$}.
\end{equation}
Then, $B$ satisfies the same condition with respect to any exhaustion of $X$ 
by an increasing sequence of compact sets.
\end{lemma}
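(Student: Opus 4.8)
The plan is to deduce everything from a single monotonicity property of the hulls together with an interleaving argument between the two exhaustions. First I would record the elementary observation that, for compact sets $B\subset P\subset Q$ in $X$, one has $\wh B_{\Oscr(P)}\subset \wh B_{\Oscr(Q)}$. Indeed, restriction of germs gives an inclusion of algebras $\Oscr(Q)\hookrightarrow\Oscr(P)$ (a function holomorphic near $Q$ is in particular holomorphic near the smaller set $P$), so if $x\in \wh B_{\Oscr(P)}\subset P\subset Q$ then $|f(x)|\le \sup_B|f|$ holds for every $f\in\Oscr(Q)$ as well, whence $x\in \wh B_{\Oscr(Q)}$. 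In particular, for any exhaustion the associated sequence of hulls is increasing, which is exactly why \eqref{eq:stable1} is a statement about an increasing sequence becoming constant.

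Next, given a second exhaustion $X=\bigcup_i L_i$ with $L_i\subset \mathring L_{i+1}$, I would interleave the two exhaustions. Since $L_i\subset\mathring L_{i+1}$ implies $\bigcup_i \mathring L_i=X$, and each $K_j$ is compact, compactness yields $K_j\subset \mathring L_{i}$ for some $i$; symmetrically each $L_i$ is contained in some $K_j$. Using the nestedness of both families I can therefore extract indices $j_0<j_1<\cdots$ and $i_1<i_2<\cdots$ with
\[
	K_{j_0}\subset L_{i_1}\subset K_{j_1}\subset L_{i_2}\subset K_{j_2}\subset\cdots.
\]
Applying the monotonicity property along this chain gives the corresponding chain of inclusions of hulls
\[
	\wh B_{\Oscr(K_{j_0})}\subset \wh B_{\Oscr(L_{i_1})}\subset \wh B_{\Oscr(K_{j_1})}\subset \wh B_{\Oscr(L_{i_2})}\subset\cdots.
\]

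Now I would invoke the hypothesis. Writing $H:=\wh B_{\Oscr(K_{j_0})}$, condition \eqref{eq:stable1} says $\wh B_{\Oscr(K_{j_k})}=H$ for every $k$ (as $j_k\ge j_0$). Squeezing each $\wh B_{\Oscr(L_{i_k})}$ between two consecutive $K$-hulls, both equal to $H$, forces $\wh B_{\Oscr(L_{i_k})}=H$ for all $k\ge 1$. Finally, since the full sequence $(\wh B_{\Oscr(L_i)})_i$ is increasing (by the monotonicity property applied to $L_i\subset L_{i+1}$) and already equals $H$ along the subsequence $(i_k)$, for any $i$ with $i_k\le i\le i_{k+1}$ we get $H=\wh B_{\Oscr(L_{i_k})}\subset \wh B_{\Oscr(L_i)}\subset \wh B_{\Oscr(L_{i_{k+1}})}=H$, so $\wh B_{\Oscr(L_i)}=H$ for all large $i$. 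This is precisely \eqref{eq:stable1} with respect to the exhaustion $(L_i)$, with the same stable hull $H$.

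I expect the only real subtlety to be the direction of monotonicity in the first step: enlarging the ambient compact set \emph{shrinks} the available function algebra and hence \emph{enlarges} the hull, which is the reverse of the usual intuition for polynomial hulls in a fixed ambient space. Once that is pinned down correctly, the interleaving and squeezing are routine, and the only point where the hypothesis $L_i\subset\mathring L_{i+1}$ is actually used is to guarantee the covering $\bigcup_i\mathring L_i=X$ needed for the compactness argument in the interleaving step.
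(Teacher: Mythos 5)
Your proof is correct and follows essentially the same route as the paper's: interleave the two exhaustions into a chain $K_{j_0}\subset L_{i_1}\subset K_{j_1}\subset\cdots$, use monotonicity of the hulls along the chain to squeeze each $\wh B_{\Oscr(L_{i_k})}$ between two copies of the stable hull, and finish with the monotonicity of the full sequence $(\wh B_{\Oscr(L_i)})_i$. The only difference is cosmetic: you spell out the monotonicity observation (larger ambient compact set, smaller algebra, larger hull) that the paper leaves implicit.
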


\begin{proof}
Set $C:= {\wh B}_{\Oscr(K_{j_0})}$.
Let $(L_l)_{l\in \N}$ be another exhaustion of $X$ by compact sets satisfying
$L_l\subset \mathring L_{l+1}$ for all $l\in\N$.
Pick an integer $l_0\in\N$ such that $C \subset L_{l_0}$. 
Since both sequences $\mathring K_j$ and $\mathring L_l$ exhaust $X$, we can find sequences of integers 
$j_1<j_2<j_3<\cdots$ and $l_1<l_2<l_3<\cdots$ such that $j_0\le j_1$, $l_0\le l_1$, and
\[
	K_{j_0 }\subset L_{l_1} \subset K_{j_1}\subset L_{l_2} \subset K_{j_2}\subset L_{l_3} \subset\cdots
\]
From this and \eqref{eq:stable1} we obtain
\[
	C=\wh{B}_{\Oscr(K_{j_0})} \subset \wh{B}_{\Oscr(L_{l_1})}  \subset  \wh{B}_{\Oscr(K_{j_1})}=C
	\subset \wh{B}_{\Oscr(L_{l_2})} \subset  \wh{B}_{\Oscr(K_{j_2})}=C \subset\cdots.
\]
It follows that $\wh{B}_{\Oscr(L_{l_j})}=C$ for all $j\in\N$.
Since the sequence of hulls $\wh{B}_{\Oscr(L_{l})}$ is increasing with $l$, 
we conclude that 
\[
	 \wh{B}_{\Oscr(L_{l})}=C  \quad \text{for all $l\ge l_1$}. 
\]
Hence, $B$ has the stable hull property with respect to the exhaustion $(L_l)_{l\in\N}$ of $X$.
\end{proof}

\begin{remark}\label{rem:Steinexh}
If a complex manifold $X$ is exhausted by an increasing sequence  of Stein
domains $X_1\subset X_2\subset \cdots\subset \bigcup_{j=1}^\infty X_j =X$
{(this holds for example if $X$ is a long $\C^n$ or a short $\C^n$, where the latter 
term refers to a manifold exhausted by biholomorphic copies of the ball)},
then we can choose an exhaustion $K_1\subset K_2\subset \cdots \subset \bigcup_{j=1}^\infty K_j =X$
such that $K_j$ is a compact set contained in $X_j$ and $\wh{(K_j)}_{\Oscr(X_j)}=K_j$
for every $j\in\N$. If $K$ is a compact set contained in some $K_{j_0}$, then clearly 
$\wh K_{\Oscr(K_j)}=\wh K_{\Oscr(X_j)}$ for all $j\ge j_0$. In such case,
$K$ has the stable hull property if and only if the sequence of hulls  
$\wh K_{\Oscr(X_j)}$ stabilizes.
This notion is especially interesting for a long $\C^n$. Imagining the exhaustion 
$X_j \cong\C^n$ of $X$ as an increasing sequence of universes, the stable 
hull property means that $K$ only influences finitely many of these universes in a nontrivial
way, while a set without SHP has nontrivial influence on all subsequent universes. 
\qed\end{remark}

We shall need the following lemma which generalizes \cite[Lemma 3.2]{Wold2008}.

%
%   MAIN LEMMA 2
%
%
%
% FF: added conditions on the set $\beta$, per Report A. This is used in the proof of Theorem 1.6.
%
\begin{lemma}\label{lem:mainstep2}
Let $n>1$. Assume that $B$ is a compact polynomially convex set in $\C^n$,
$K_1,\ldots, K_m$ are pairwise disjoint compact sets with nonempty interiors 
in $\C^n\setminus B$ such that $B\cup (\bigcup_{j=1}^m K_j)$ is polynomially convex, 
and $\beta \subset \C^n\setminus \bigl(B\cup (\bigcup_{j=1}^m K_j)\bigr)$ is a finite set.
Then there exists a Fatou--Bieberbach map $\phi\colon\C^n\hra\C^n$ satisfying the following conditions:
\begin{itemize}
\item[\rm (i)]    $\wh{\phi(B)} = \phi(B)$;
\vspace{1mm}
\item[\rm (ii)]   $\wh{\phi(K_j)} \not\subset \phi(\C^n)$ for all $j=1,\ldots,m$;
\vspace{1mm}
\item[\rm (iii)]  $\phi(\beta) \subset  \wh{\phi(K_1)}$.
\end{itemize}
Furthermore, we can choose $\phi$ such that $\phi|_B$ is as close as desired to the identity map.
\end{lemma}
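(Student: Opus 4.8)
The plan is to construct $\phi$ in the factored form $\phi=\tau\circ\psi^{-1}\circ\theta$, where $\theta\colon\C^n\hra\C^*\times\C^{n-1}$ is a Fatou--Bieberbach map with \emph{Runge} image that is close to the identity on $B$, while $\psi,\tau\in\Aut(\C^*\times\C^{n-1})$ are automorphisms furnished by the Anders\'en--Lempert theorem: $\psi$ will carry out the \Wprocess\ simultaneously on all the $K_j$, and $\tau$ will absorb $\beta$. Let $\Sigma=\{0\}\times\C^{n-1}$ be the hyperplane complementary to $\C^*\times\C^{n-1}$. After a complex-linear change of coordinates (which I conjugate away at the end, and which affects neither the conclusions nor the closeness to the identity on $B$), I may assume that the polynomially convex set $E:=B\cup\bigl(\bigcup_{j=1}^m K_j\bigr)$ and the finite set $\beta$ are disjoint from $\Sigma$, so that $E\cup\beta\subset\C^*\times\C^{n-1}$. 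I work on $\C^*\times\C^{n-1}$, which has the density property (Varolin), so that Theorem \ref{th:AL} is available there. To control condition~(i) I fix a compact neighborhood $N\subset\C^*\times\C^{n-1}$ of $B$, disjoint from the $K_j$ and from $\beta$, that is polynomially convex with smooth strongly pseudoconvex boundary; such $N$ exist because $B$ is polynomially convex. It will suffice to make $\phi$ as $\Cscr^2$-close to the identity on $N$ as we wish.

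The main technical step is the construction of $\theta$. Following Wold's method of building Fatou--Bieberbach maps \cite{Wold2008}, I obtain $\theta=\lim_{k\to\infty}F_k\circ\cdots\circ F_1$ as a limit of automorphisms $F_k\in\Aut(\C^n)$, each close to the identity on $N$, where at the $k$-th stage $F_k$ pushes the part of a large ball $\B(0;R_k)$ lying near $\Sigma$ away from $\Sigma$. Since $N$ is disjoint from $\Sigma$, the requirement that $F_k$ be close to the identity on $N$ is compatible with this pushing; choosing the successive corrections summable on $N$, the limit $\theta$ is a Fatou--Bieberbach map that is $\Cscr^2$-close to the identity on $N$ and whose image $\Omega:=\theta(\C^n)$ is Runge in $\C^n$ with $\Omega\cap\Sigma=\emptyset$. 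As $\Omega$ is Runge, the sets $B':=\theta(B)$ and $K'_j:=\theta(K_j)$ are polynomially convex, $B'\cup\bigcup_j K'_j$ is polynomially convex, and $\theta(N)$ is a $\Cscr^2$-small, polynomially convex, strongly pseudoconvex deformation of $N$.

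Next I place pairwise disjoint copies $M_1,\dots,M_m$ of Stolzenberg's totally real set (as in the proof of Lemma \ref{lem:mainstep}) in $\C^*\times\C^{n-1}$, chosen $\Oscr(\C^*\times\C^{n-1})$-convex both individually and jointly, disjoint from $\theta(N)$ and from the $K'_j$, and with $\wh{M_j}\cap\Sigma\ne\emptyset$; I also fix a nonempty open set $V\subset\wh{M_1}$. Using Lemma \ref{lem:stability} to absorb the finite set $\theta(\beta)$, the whole configuration $B'\cup\bigcup_j(K'_j\cup M_j)\cup\theta(\beta)$ may be taken $\Oscr(\C^*\times\C^{n-1})$-convex. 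Applying Theorem \ref{th:AL} on $\C^*\times\C^{n-1}$ to a single isotopy of injective holomorphic maps that is the identity on a neighborhood of $\theta(N)$ throughout, keeps every intermediate domain Runge, and shrinks each $M_j$ within itself and moves it into $\mathring K'_j$, I obtain $\psi\in\Aut(\C^*\times\C^{n-1})$, close to the identity on $\theta(N)$, with $\psi(M_j)\subset\mathring K'_j$ for all $j$. Setting $\phi_0:=\psi^{-1}\circ\theta$, this gives $M_j\subset\psi^{-1}(\mathring K'_j)=\phi_0(\mathring K_j)$, so $\wh{\phi_0(K_j)}\supset\wh{M_j}$ meets $\Sigma$, whereas $\phi_0(\C^n)=\psi^{-1}(\Omega)\subset\C^*\times\C^{n-1}$ misses $\Sigma$; hence $\wh{\phi_0(K_j)}\not\subset\phi_0(\C^n)$, which is (ii), and $\wh{\phi_0(K_1)}\supset V$.

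Finally, exactly as in the second part of the proof of Lemma \ref{lem:mainstep}, I choose $\tau\in\Aut(\C^*\times\C^{n-1})$ close to the identity on $\theta(N)$ and on $\phi_0(K_1)$ which moves the finite set $\phi_0(\beta)$ into $V\cap(\C^*\times\C^{n-1})\subset\wh{M_1}\subset\wh{\phi_0(K_1)}$; this is possible because $\phi_0(K_1)\cup\phi_0(\beta)$ is $\Oscr(\C^*\times\C^{n-1})$-convex and $\phi_0(\beta)$ is disjoint from $\theta(N)$. Put $\phi:=\tau\circ\phi_0$. Then (ii) and (iii) survive (for $\tau$ close enough to the identity on $\phi_0(K_1)$), while $\phi$ remains $\Cscr^2$-close to the identity on $N$, since $\theta,\psi^{-1},\tau$ are each close to the identity on the relevant neighborhoods of $N,\theta(N),\phi_0(N)$. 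Consequently $\phi(N)$ is a $\Cscr^2$-small perturbation of the polynomially convex strongly pseudoconvex set $\theta(N)$, hence polynomially convex by the second part of Lemma \ref{lem:stability}; as $\phi$ is a biholomorphism of $\mathring N$ onto the Runge domain $\mathring{\phi(N)}$ and $B$ is a polynomially convex compact in $\mathring N$, the set $\phi(B)$ is polynomially convex, giving (i), and the last assertion of the lemma. I expect the principal difficulty to lie in coordinating the applications of Theorem \ref{th:AL}: namely, building the Runge Fatou--Bieberbach map $\theta$ that both avoids $\Sigma$ and is $\Cscr^2$-close to the identity on $N$, and then running the \Wprocess\ on all $m$ sets at once through an isotopy that simultaneously stays the identity near $N$ and keeps the moving configuration $\Oscr(\C^*\times\C^{n-1})$-convex (hence its domains Runge) at every time.
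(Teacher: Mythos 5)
Your proposal is correct and follows essentially the same route as the paper: the factorization $\phi=\tau\circ\psi^{-1}\circ\theta$ through $\C^*\times\C^{n-1}$, the simultaneous \Wprocess\ on disjoint Stolzenberg sets $M_j$ placed away from a neighborhood of $B$, the absorption of $\beta$ by a final automorphism as in the second part of Lemma \ref{lem:mainstep}, and the appeal to Lemma \ref{lem:stability} for condition (i) are all exactly the paper's argument. The only (inessential) deviation is your construction of $\theta$: rather than re-running Wold's limiting construction to make the Fatou--Bieberbach map close to the identity on $N$ from the start, the paper takes an arbitrary Runge Fatou--Bieberbach map $\theta_1\colon\C^n\hra\C^*\times\C^{n-1}$ and corrects it by an automorphism $\theta_2\in\Aut(\C^*\times\C^{n-1})$ approximating $\theta_1^{-1}$ on a ball containing $B$, which is slightly cleaner but yields the same $\theta$.
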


\begin{proof}
For simplicity of notation we give the proof for $n=2$; the same argument applies for any $n\ge 2$.

By enlarging $B$ slightly, we may assume that it is a compact strongly pseudoconvex and polynomially 
convex domain in $\C^n$. Choose a closed ball $\Bcal \subset \C^2$ containing $B$ in its interior. 
Let $\Lambda\subset \C^2\setminus \Bcal$ be an affine complex line. 
Up to an affine change of coordinates on $\C^2$ we may assume that
$\Lambda=\{0\}\times \C$. 

As in the proof of Lemma  \ref{lem:mainstep},  we find 
an injective holomorphic map $\theta_1 \colon \C^2\hra \C^*\times \C$ whose image  is 
Runge in $\C^2$, and hence the set $\theta_1(\Bcal)$ is polynomially convex. 
Since $\Bcal$ is contractible, we can connect the identity map on $\Bcal$ to $\theta_1|_{\Bcal}$ 
by an isotopy of biholomorphic maps $h_t\colon \Bcal\to\Bcal_t$ $(t\in[0,1])$ with Runge images in $\C^*\times\C$. 
Theorem \ref{th:AL} furnishes an automorphism $\theta_2\in \Aut(\C^*\times\C)$ 
such that $\theta_2$ approximates $\theta_1^{-1}$ on $\theta_1(\Bcal)$.
The composition $\theta=\theta_2\circ \theta_1\colon \C^2\hra\C^*\times\C$
is then an injective holomorphic map which is close to the identity on $\Bcal$. Assuming that the
approximation is close enough, the set $B':=\theta(B)$ is polynomially convex in view of Lemma \ref{lem:stability}.
 
Set $K=\bigcup_{j=1}^m K_j$, $K'_j=\theta(K_j)$ for $j=1,\ldots,m$, and $K'=\theta(K)= \bigcup_{j=1}^m K'_j$.
Note that  the set $B'\cup K'=\theta(B\cup K) \subset \C^*\times \C$ is $\Oscr(\C^*\times \C)$-convex.

Choose $m$ pairwise disjoint copies $M_1,\ldots, M_m\subset (\C^*\times \C) \setminus B'$ of 
Stolzenberg's compact set $M$ (cf.\ \cite{Stolzenberg1966})
described in the proof of Lemma \ref{lem:mainstep}.
Explicitly, each set $M_j$ is $\Oscr(\C^*\times \C)$-convex and its polynomial hull 
$\wh M_j$ intersects the complex line $\{0\}\times \C$ (which lies 
in the complement of $\theta(\C^2)$). 
By placing the sets $M_j$ sufficiently far apart and away from $B'$, we may assume 
that the compact set $B'\cup( \bigcup_{j=1}^m M_j)$ is $\Oscr(\C^*\times \C)$-convex.
Pick a slightly bigger compact set $B''\subset \theta(\C^2)$, containing $B'$ 
in its interior, such that the sets $B''\cup (\bigcup_{j=1}^m K'_j)$  and 
$B''\cup( \bigcup_{j=1}^m M_j)$ are still $\Oscr(\C^*\times \C)$-convex.

%By the procedure explained in the proof of Lemma \ref{lem:mainstep} (using Theorem \ref{th:AL}), 
%we can find for every $\epsilon>0$ an automorphism $\psi\in\Aut(\C^*\times\C)$ such that

We claim that for every $\epsilon>0$ there is an automorphism $\psi\in\Aut(\C^*\times\C)$ such that
\begin{itemize}
\item[\rm (a)]  $|\psi(z)-z|<\epsilon$ for all $z\in B''$, and
\vspace{1mm}
\item[\rm (b)]  $\psi(M_j)\subset K'_j$ for $j=1,\ldots,m$.
\end{itemize}
%
%  FF: according to the suggestion by referee A, I removed some details of this proof.
%
To obtain such $\psi$, we apply the construction in the proof of Lemma \ref{lem:mainstep}
to find an isotopy of smooth diffeomorphisms 
\[
	h_t\colon M=\bigcup_{j=1}^m M_j \to M^t=h_t(M) \subset \C^*\times \C, \quad t\in [0,1],
\]
such that $h_0=\Id|_M$, the set $M^t=\bigcup_{j=1}^m h_t(M_j)$ consists of smooth totally real submanifolds, 
$B''\cap M^t=\emptyset$ for all $t\in[0,1]$, $B''\cup M^t$ is $\Oscr(\C^*\times \C)$-convex for all $t\in [0,1]$,
and $h_1(M_j)\subset \mathring K'_j$ for $j=1,\ldots,m$.
It follows  that $h_1$ can be approximated uniformly on $M$ by a holomorphic 
automorphism $\psi\in \Aut(\C^*\times\C)$ which at the same time approximates the identity map on $B''$.
(For the details in a similar context, see \cite[proof of Theorem 2.3]{ForstnericRosay1993} or
\cite[proof of Corollary 4.12.4]{Forstneric2011}.)
The injective holomorphic map 
\[
	\phi := \psi^{-1}\circ\theta\colon \C^2\hra \C^*\times \C
\]
then approximates the identity map on a neighborhood of $B$ and satisfies $M_j \subset \phi(K_j)$ for $j=1,\ldots,m$. 
It follows that 
\[
	\wh{\phi(K_j)} \cap (\{0\}\times \C)\ne\emptyset\ \  \text{for all $j=1,\ldots,m$}.
\]
If the approximation $\psi|_{B''}\approx \Id$ in part (a) is close enough, then the set $\phi(B)=\psi^{-1}(B')$
is still polynomially convex by Lemma \ref{lem:stability}. 
Clearly, $\phi$ satisfies properties (i) and (ii) Lemma \ref{lem:mainstep2}, and 
property (iii) can be achieved by applying Lemma \ref{lem:mainstep}.
\end{proof}

%
%
%   PROOF OF THEOREM SC
%
%

\begin{proof}[Proof of Theorem \ref{th:SC}]
%
% FF:  I have added another countable set $\Gamma$ into the construction in order to ensure
%        that we kill all psh functions, per report A.
%
%  Also, the complication with a decreasing set of the sets B^k (the previous version) is no longer
%  necessary in this version. Lemma 4.3 has been restated accordingly.
%
% Finally, I replaced C^2 by C^n; there is no point here to restrict to C^2
%
\noindent {\em Proof of part (a).} 
Let $B$ be the given regular compact polynomially convex set in $\C^n$. To begin the induction,
set $B_1:=B\subset \C^n=X_1$ and choose a pair of disjoint countable set 
\begin{eqnarray*}
	A_1 &=& \{a_1^l : l\in\N\} \subset \C^n\setminus B_1,\qquad\qquad \overline A_1=\C^n\setminus \mathring B_1, \\
	\Gamma_1 &=& \{\gamma_1^l : l\in\N\} \subset \C^n\setminus (A_1\cup B_1),
	\quad \overline \Gamma_1=\C^n\setminus \mathring B_1.
\end{eqnarray*}
%such that $\overline A_1=\C^n\setminus \mathring B_1$.  
Let $\B(a_1^1,r_1)$ denote the 
closed ball of radius $r_1$ centered at $a_1^1$.  By choosing $r_1>0$ small enough 
we may ensure that $\B(a_1^1,r_1) \cap B_1=\emptyset$, $\gamma^1_1\notin \B(a_1^1,r_1)\cup B_1$,
and the set $\B(a_1^1,r_1)\cup B_1$ is polynomially convex (cf. Lemma \ref{lem:stability}). 
Lemma \ref{lem:mainstep2} furnishes an injective holomorphic map  $\phi_1\colon \C^n\hra\C^n$ 
such that the set $B_2:=\phi_1(B_1)\subset \C^n$ is polynomially convex, while the 
compact set 
\[
	C^{1}_{1,1}:=\phi_1(\B(a_1^1,r_1))\subset \C^n
\]
satisfies 
\[
	\wh{C^{1}_{1,1}} \setminus \phi_1(\C^n)\neq\emptyset\quad 
	\text{and}\quad \phi_1(\gamma^1_1)\in \wh{C^{1}_{1,1}}.
\]

We proceed recursively.  Suppose that for some $k\in\N$ we have found 
\begin{itemize}
\item injective holomorphic maps $\phi_1,\ldots, \phi_k\colon \C^n\hra \C^n$,
\vspace{1mm}
\item compact polynomially convex sets $B_1,B_2\ldots,B_{k+1}$ in $\C^n$ such that $B_{i+1}=\phi_{i}(B_{i})$ for $i=1,\ldots,k$,
\vspace{1mm}
\item countable sets $A_1,\ldots,A_k\subset\C^n$ such that for every $i=1,\ldots,k$ we have
\[
	A_i\subset \C^n\setminus B_i, \quad \overline A_i=\C^n\setminus \mathring B_i,\quad  
	A_i=\phi_{i-1}(A_{i-1}) \cup \{a_i^l : l\in \N\}
\]
(where we set $A_0=\emptyset$), 
\vspace{1mm}
\item countable sets $\Gamma_1,\ldots,\Gamma_k\subset\C^n$ such that for every $i=1,\ldots,k$ we have
\[
	\Gamma_i\subset \C^n\setminus A_i\cup B_i, \quad \overline \Gamma_i=\C^n\setminus \mathring B_i,\quad  
	\Gamma_i=\phi_{i-1}(\Gamma_{i-1}) \cup \{\gamma_i^l : l\in \N\}
\]
(where we set $\Gamma_0=\emptyset$), and 
\vspace{1mm}
\item numbers $r_1> \ldots > r_k>0$
\end{itemize}
such that, setting for all $(i,l)\in\N^2$ with $1\le i+l\le k+1$:
%
% FF: I changed the arrangement of indexes for the points b. First of all,
% we need the index k to keep track of the generation. For consistency of notation,
% I put this index donwstairs. Similarly for the beta's.
%
\begin{eqnarray*}
	b_{k,i}^l:=\phi_{k-1} \circ\ldots\circ\phi_i(a_i^l)\in A_k \ \ \text{if}\ (i,l)\ne (k,1),
	\quad b_{k,k}^1:=a_k^1, \\
	\beta_{k,i}^l:=\phi_{k-1} \circ\ldots\circ\phi_i(\gamma_i^l)\in \Gamma_k \ \ \text{if}\ (i,l)\ne (k,1),
	\quad \beta_{k,k}^1:=\gamma_k^1,
\end{eqnarray*}
the following conditions hold  for all pairs $(i,l)\in\N^2$ with $i+l\le k+1$:
%
%
% FF: for consistency of notation, I switched the position of the indexes k and l (k is now downstairs)
%
\begin{itemize}
\item[$(1_k)$] the closed balls $\B(b_{k,i}^l,r_{k})$ are pairwise disjoint and contained in $\C^n\setminus B_k$,
%
% FF: I propose to add the following line for clarity. This is needed to obtain (k_k)
%
and $\{\beta_{k,i}^l:i+l\le k+1\} \cap  \bigcup_{i+l\le k+1} \B(b_{k,i}^l,r_{k}) =\emptyset$
(since $A_k\cap \Gamma_k=\emptyset$, the latter condition holds provided $r_k>0$ is small enough);
\vspace{1mm}
\item[$(2_k)$] the set $\bigcup_{i+l\le k+1} \B(b_{k,i}^l,r_{k}) \cup B_k$ is polynomially convex;
\vspace{1mm}
\item[$(3_k)$] the set $(\phi_{k-1}\circ\ldots\circ\phi_i)^{-1}(\B(b_{k,i}^l,r_{k}))$ is contained in $\B(a_i^l,{r_i}/{2^{k}})$;
\vspace{1mm}
\item[$(4_k)$]  the set $C^{l}_{k,i}:=\phi_{k}\bigl(\B(b_{k,i}^l,r_{k})\bigr)$ satisfies $\wh{C^{l}_{k,i}} \setminus \phi_{k}(\C^n) \neq\emptyset$;
\vspace{1mm}
\item[$(5_k)$]  $\{\phi_k(\beta_{k,i}^l):i+l\le k+1\} \subset  \wh{C^{1}_{k,1}}$.
\end{itemize}

We now explain the inductive step. We begin by adding to $\phi_k(A_k)$ countably many points in 
$\C^n\setminus (\phi_k(A_k)\cup B_{k+1})$ to get a countable set 
\[
	A_{k+1} = \phi_k(A_k) \cup\{a_{k+1}^l \colon l\in\N \}\subset\C^n\setminus B_{k+1}
\]
such that 
\[
	\overline A_{k+1} = \C^n\setminus \mathring B_{k+1}.
\] 
In the same way, we find the next countable set 
\[
	\Gamma_{k+1} = \phi_k(\Gamma_k) \cup\{\gamma_{k+1}^l \colon l\in\N \}\subset\C^n\setminus (A_{k+1}\cup B_{k+1})
\]
such that 
\[
	\overline  \Gamma_{k+1}  = \C^n\setminus \mathring B_{k+1}.
\] 
For every pair of indices $(i,l)\in\N^2$ with $i+l\le k+2$ we set 
\begin{eqnarray*}
	b_{k+1,i}^l:=\phi_k\circ\ldots\circ\phi_i(a_i^l)\in A_{k+1} \ \ \text{if}\ (i,l)\ne (k+1,1),
	\quad b_{k+1,k+1}^1:=a_{k+1}^{1}, \\
	\beta_{k+1,i}^l:=\phi_{k} \circ\ldots\circ\phi_i(\gamma_i^l)\in \Gamma_{k+1} \ \ \text{if}\ (i,l)\ne (k+1,1),
	\quad \beta_{k+1,k+1}^1:=\gamma_{k+1}^1.
\end{eqnarray*}
Choose a number $r_{k+1}$ with $0<r_{k+1}<r_k$ and so small that 
the following conditions hold for all $(i,l)\in\N^2$ with $i+l\le k+2$:
\begin{itemize}
\item[$(1_{k+1})$] the closed balls $\B(b_{k+1,i}^l,r_{k+1})$ are pairwise disjoint and contained in $\C^n\setminus B_{k+1}$, and
$\{\beta_{k+1,i}^l : i+l\le k+2\} \cap \bigl( \bigcup_{i+l\le k+2} \B(b_{k+1,i}^l,r_{k+1}) \cup B_{k+1}\bigr) =\emptyset$;
\vspace{1mm}
\item[$(2_{k+1})$] the set $\bigcup_{i+l\le k+2} \B(b_{k+1,i}^l,r_{k+1}) \cup B_{k+1}$ is polynomially convex;
\vspace{1mm}
\item[$(3_{k+1})$]  the set $(\phi_k\circ\ldots\circ\phi_i)^{-1}(\B(b_{k+1,i}^l,r_{k+1}))$ is contained in $\B(a_i^l,{r_i}/{2^{k+1}})$.
\end{itemize} 
%Furthermore, by choosing $r_{k+1}>0$ small enough we may ensure that
%\[
%	\{\beta_{k+1,i}^l : i+l\le k+2\} \cap \bigl( \bigcup_{i+l\le k+2} \B(b_{k+1,i}^l,r_{k+1}) \cup B_{k+1}\bigr) =\emptyset.
%\]
Lemma \ref{lem:mainstep2} furnishes a Fatou--Bieberbach map     
$\phi_{k+1} \colon \C^n\hra\C^{n}$ such that the compact set $B_{k+2}:=\phi_{k+1}(B_{k+1})$
is polynomially convex, while the compact sets
\[
	C^{l}_{k+1,i}:=\phi_{k+1}\bigl(\B(b_{k+1,i}^l,r_{k+1})\bigr),\quad i+l\leq k+2 
\] 
satisfy the following conditions:
\begin{itemize}
\item[$(4_{k+1})$] $\wh{C^{l}_{k+1,i}} \setminus \phi_{k+1}(\C^n) \neq\emptyset$ for all $(i,l)\in\N^2$ with $i+l\leq k+2$;
\vspace{1mm}
\item[$(5_{k+1})$]  $\{\phi_{k+1}(\beta_{k+1,i}^l):i+l\le k+2\} \subset  \wh{C^{1}_{k+1,1}}$.
\end{itemize} 
This completes the induction step and the recursion may continue.

Let $X=\bigcup_{k=1}^\infty X_k$ be the long $\C^n$ determined by the sequence $(\phi_k)_{k=1}^\infty$. 
Since the set $B_k\subset \C^n$ is polynomially convex and $B_{k+1}=\phi_{k}(B_{k})$ for all $k\in\N$, 
the sequence $(B_k)_{k \in\N}$  determines a subset $B=B_1\subset X$ such that 
\begin{equation}\label{eq:hullB}
	\wh{B}_{\Oscr(X_k)}=B \quad \text{for all $k\in\N$}. 
\end{equation}
This means that the initial compact set $B\subset \C^n=X_1$ has the stable hull property in $X$. 

By the construction, the countable sets $A_k\subset \C^n\setminus B_k$ satisfy $\phi_k(A_k)\subset A_{k+1}$ for each $k\in\N$,
and hence they determine a countable set $A\subset X\setminus B$. Furthermore, since $\overline A_k = \C^n\setminus \mathring B_k$ 
for every $k\in\N$, it follows that $\overline A=X\setminus \mathring B$.
Similarly, the family $(\Gamma_k)_{k\in\N}$ determines a countable set $\Gamma\subset X\setminus B$
such that $\overline \Gamma =X\setminus \mathring B$.

We will now show that $B$ is the biggest regular compact set in $X$ with the stable hull property.
Note that Condition $(4_k)$, together with the fact that each 
set $C^{l}_{k,i}$ contains one of the sets $C^{l'}_{k+1,i'}$ in the next generation according to Condition $(3_{k+1})$
(and hence it contains one of the sets $C^{l'}_{k+j,i'}$ for every $j=1,2,\ldots$), implies
\begin{equation}\label{eq:Ukl}
	\wh{\bigl( C^{l}_{k,i}\bigr)}_{\Oscr(X_{k+j+1})} \setminus X_{k+j} \ne \emptyset
	\quad \text{for all $j=0,1,2,\ldots$}.
\end{equation}
Thus, none of the sets $C^{l}_{k,i}$ has the stable hull property.
Our construction ensures that the centers of these sets form a dense
sequence in $X\setminus B$, consisting of all points in the set $A$ determined by the family $(A_k)_{k\in\N}$,
in which every point appears infinitely often. Furthermore, Condition $(3_k)$ shows that 
every compact set $K\subset X$ with $\mathring K\setminus B\ne \emptyset$ 
contains one (in fact, infinitely many) of the sets $C^{l}_{k,i}$. In view of \eqref{eq:Ukl}, it
follows that there is an integer $k_0\in \N$ (depending on $K$) such that 
\[
	\wh K_{\Oscr(X_{k+1})} \not\subset X_k\quad  \text{for all}\ k\ge k_0.
\]
This means that $K$ does not have the stable hull property. 
It follows that the set $B$ is the strongly stable core of $X$. 

Finally, condition $(5_k)$ ensures that the $\Oscr(X)$-hull of a compact ball centered at the point $a^1_1\in A$ contains the 
countable set $\Gamma\subset X$ determined by the family $\{\Gamma_k\}_{k\in\N}$. 
Since $\Gamma$ is dense in $X\setminus B$, it follows that the manifold $X$ does not admit 
any nonconstant plurisubharmonic function. (See the proof of Theorem \ref{th:main} for the details.)

This proves part (a) of Theorem \ref{th:SC}.

\vspace{1mm}

\noindent {\em Proof of part (b).} Let $U\subset \C^n$ be an open set. 
Pick a regular compact polynomially convex set $B$ contained in $U$. We modify the recursion in the proof of part (a)
by adding to $B$ a new small closed ball $B'\subset U\setminus B$ at every stage.
In this way, we inductively build an increasing sequence $B=B^1\subset B^2\subset\cdots \subset U$
of compact polynomially convex sets whose union
$\Bcal := \bigcup_{k=1}^\infty B^k \subset U$ is everywhere dense in $U$,
and a sequence of Fatou--Bieberbach maps $\phi_k\colon\C^n\hra\C^n$
such that, writing 
\[
	B^k_1=B^k\quad\text{and}\quad \text{$B^k_{j+1}=\phi_{j}(B^k_{j})$\ \ for all $j,k\in\N$},
\]
the following two conditions hold:
\begin{itemize}
\item[\rm (a)] $B^k=B^{k-1} \cup \Bcal^k$ for all $k>1$, where $\Bcal^k$ is a small closed ball in $U\setminus B^{k-1}$;
\vspace{1mm}
\item[\rm (b)]  the set $B^k_{j}$ is polynomially convex for all $j,k\in\N$.
\end{itemize}
At the $k$-th stage of the construction we have already chosen Fatou--Bieberbach maps
$\phi_1,\ldots,\phi_k$, but we can nevertheless achieve Condition (b) for all $j=1,\ldots, k+1$ by choosing
the ball $\Bcal^k$ sufficiently small. Indeed, the image of a small ball by an injective
holomorphic map is a small strongly convex domain, and hence the polynomial 
convexity of the set $B^k_{j}$ for $j=1,\ldots, k+1$ follows from Lemma \ref{lem:stability}. 
For values $j> k+1$, Condition (b) is achieved by the construction in the proof of 
Lemma \ref{lem:mainstep2}; indeed, each of the subsequent maps $\phi_{k+1},\phi_{k+2},\ldots$ 
in the sequence preserves polynomial convexity of $B^k_{k+1}$.

By identifying the sets $U$ and $B^k=B^k_1$ (considered as subsets of $\C^n=X_1)$ 
with their images in the limit manifold $X=\bigcup_{k=1}^\infty X_k$, we thus obtain the following 
analogue of \eqref{eq:hullB}:
\[
	\wh{(B^k)}_{\Oscr(X_j)} = B^k \quad \text{for all $j,k\in\N$}. 
\]
This means that each set $B^k$ $(k\in\N)$ lies in the stable core $SC(X)$. 
Since $\bigcup_{k=1}^\infty B^k$ is dense in $U$ by the construction, we have that $\overline U\subset \overline{SC(X)}$.

On the other hand, writing $U_1=U$ and $U_{k+1}:=\phi_{k}\circ\cdots\circ \phi_1(U)$ for $k=1,2,\ldots$, 
the balls $\B(b_{k,i}^l,r_{k})$ chosen at the $k$-stage of the construction (see the proof of part (a))
are contained in $\C^n \setminus \overline U$ and, as $k$ increases, they
include more and more points from a countable dense set  $A\subset X\setminus \overline U$
which is built inductively as in the proof of part (a). By performing the \Wprocess\ on each of the balls
$\B(b_{k,i}^l,r_{k})$ (cf.\ condition $(4_k)$ above) at every stage, we can ensure that none of the points of $A$ belongs to 
the stable core $SC(X)$. Since $SC(X)$ is an open set by the definition and $\overline A=X\setminus  U$, 
we conclude that $SC(X)\subset U$. We have seen above that $\overline U\subset \overline{SC(X)}$, 
and hence $\overline{SC(X)}=\overline U$. 

%
% FF: The next paragraph is new, per report B.
%
It remains to show that $X$ can be chosen such that it does not admit any nonconstant holomorphic function.
By the same argument as in the proof of part (a), we can find a countable dense set
$\Gamma\subset X\setminus (A\cup \overline U)$ which is dense in $X\setminus U$ and is contained 
in the $\Oscr(X)$-hull of a certain compact set in $X\setminus \overline U$. It follows that every plurisubharmonic function $f$ on $X$
is bounded above on $\Gamma$, and hence on $\overline \Gamma=X\setminus U$.
If $\overline U$ is compact, the maximum principle implies that $f$ is also
bounded on $U$; hence it is bounded on $X$ and therefore constant. 
If $U$ is not relatively compact then we are unable to make this conclusion. 
However, we can easily ensure that $X\setminus \overline U$ contains a Fatou-Bieberbach domain; 
indeed, it suffices to choose the first Fatou-Bieberbach map $\phi_1\colon \C^n\to\C^n$ in 
the sequence determining $X$ such that $\C^n\setminus \phi_1(\C^n)$ contains a Fatou-Bieberbach domain 
$\Omega$. In this case, every holomorphic function $f\in\Oscr(X)$ is bounded on $\Gamma$, and hence on $\Omega$,
so it is constant on $\Omega\cong\C^n$. Therefore it is constant on $X$ by the identity principle. 

This proves part (b) and hence completes the proof of Theorem \ref{th:SC}.
\end{proof}

%%%%%%%%%%
%%%%%%%%%%
%%%%%%%%%%  EXHAUSTION OF C^n BY NON-RUNGE F-B DOMAINS
%%%%%%%%%%
%%%%%%%%%%
%%%%%%%%%%

\section{An exhaustion of $\C^2$ by non-Runge Fatou--Bieberbach domains} 
\label{sec:exhaustion}

In this section, we show the following.

%
%   NON-RUNGE EXHAUSTION
%
\begin{proposition}\label{prop:nonRunge} 
Let $n>1$. There exists an increasing sequence 
$X_1\subset X_2\subset \cdots\subset \bigcup_{k=1}^\infty X_k=\C^n$
of Fatou--Bieberbach domains in $\C^n$ which are not Runge in $\C^n$.
\end{proposition}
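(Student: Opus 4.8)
The plan is to build the sequence $X_1 \subset X_2 \subset \cdots$ recursively, using at each step a single application of the \Wprocess{} as packaged in Lemma \ref{lem:mainstep2}. The guiding idea is that ``Runge'' is exactly the obstruction we want to violate: a Fatou--Bieberbach domain $\Omega = \phi(\C^n)$ fails to be Runge in $\C^n$ precisely when there is a polynomially convex compact set $K \subset \C^n$ with $\wh{\phi(K)} \not\subset \phi(\C^n)$, i.e.\ the polynomial hull of $\phi(K)$ escapes the image. Lemma \ref{lem:mainstep2} (or already the first part of Lemma \ref{lem:mainstep}) produces exactly such Fatou--Bieberbach maps. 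So the whole point is to arrange these escaping-hull Fatou--Bieberbach domains to be \emph{nested} and to \emph{exhaust} $\C^n$.

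First I would fix a large compact polynomially convex set, say $K_1 = \overline{\B(0;1)}$, and apply Lemma \ref{lem:mainstep} to obtain a Fatou--Bieberbach map $\phi_1 \colon \C^n \hra \C^n$ with $\wh{\phi_1(K_1)} \setminus \phi_1(\C^n) \ne \emptyset$. Setting $X_1 := \phi_1(\C^n)$, this is a Fatou--Bieberbach domain that is already non-Runge, witnessed by $K_1$. The recursion then continues: having constructed $X_k = \Phi_k(\C^n)$ where $\Phi_k = \phi_k \circ \cdots \circ \phi_1$, I would apply Lemma \ref{lem:mainstep} again, this time to a compact polynomially convex set large enough to contain $\overline{X_k}$ (or rather to the preimage $\Phi_k^{-1}(\overline{X_k})$), producing $\phi_{k+1}$ with the escaping-hull property and setting $X_{k+1} := \phi_{k+1}(X_k)$ appropriately, so that $X_k \subset X_{k+1}$. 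The two properties I must simultaneously maintain are: (i) each $X_k$ is non-Runge, guaranteed at the moment of its construction by the escaping hull; and (ii) $\bigcup_k X_k = \C^n$, which requires that the nested domains eventually swallow every point.

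The step I expect to be the main obstacle is reconciling these two demands, since they pull in opposite directions. Making the hull escape forces $\phi_{k+1}$ to distort the image substantially near the boundary, whereas exhausting all of $\C^n$ requires $X_{k+1}$ to grow to cover an increasing family of compact sets. The standard device is to interleave: require that $\phi_{k+1}$ be close to the identity on a prescribed large compact set $L_k$ (so that $X_{k+1} \supset X_k$ and $X_{k+1}$ contains $L_k$, forcing exhaustion as $L_k \nearrow \C^n$), while performing the escaping-hull \Wprocess{} only on compact sets placed \emph{outside} $L_k$ in the complement of $X_k$. Lemma \ref{lem:mainstep2} is tailored for exactly this: it delivers a Fatou--Bieberbach map that is $\Cscr^0$-close to the identity on a given polynomially convex $B$ (take $B \supset L_k$) while still making hulls of auxiliary sets $K_j$ in the complement escape the image.

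To make the bookkeeping precise I would proceed as follows. Choose an exhaustion $P_1 \subset P_2 \subset \cdots$ of $\C^n$ by closed balls with $P_k \subset \mathring P_{k+1}$. At stage $k$, working in the source copy of $\C^n$ via $\Phi_k^{-1}$, I set $B = \Phi_k^{-1}(P_k \cap X_k)$ (a polynomially convex set, after enlarging slightly), pick an auxiliary compact set $K$ with nonempty interior in the complement so that $B \cup K$ is polynomially convex (Lemma \ref{lem:stability}), and invoke Lemma \ref{lem:mainstep2} to get $\phi_{k+1}$ with $\phi_{k+1}|_B \approx \Id$ and $\wh{\phi_{k+1}(K)} \not\subset \phi_{k+1}(\C^n)$. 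The closeness to the identity on $B$ ensures $P_k \cap X_k \subset X_{k+1}$ and, by letting the approximation tighten as $k$ grows, that $P_k \subset X_{k+1}$ for all $k$; hence $\bigcup_k X_k = \C^n$. The escaping hull witnesses non-Rungeness of the relevant domain at each stage. A final verification I would spell out is that non-Rungeness of the $X_k$ persists: since the witnessing compact set and its escaping hull sit in a fixed region and the later maps $\phi_{k+j}$ are close to the identity there, the hull continues to escape, so each $X_k$ remains non-Runge in $\C^n$. This last persistence argument, tracking the hull through the tail of the recursion, is the delicate point, and it mirrors condition $(4_k)$ and equation \eqref{eq:Ukl} in the proof of Theorem \ref{th:SC}.
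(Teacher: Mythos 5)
Your two guiding requirements --- an escaping hull for a set placed outside a large ball, and identity-approximation on that ball --- are the right ones, and Lemma \ref{lem:mainstep2} is the right tool; but the way you assemble the domains does not produce an increasing exhaustion of $\C^n$. Whichever way you compose, your sets $X_k=\Phi_k(\C^n)$ stay inside the \emph{first} copy of $\C^n$: with $X_{k+1}=\phi_{k+1}(X_k)$ there is no inclusion $X_k\subset X_{k+1}$, since closeness of $\phi_{k+1}$ to the identity on a compact set $B$ controls $\phi_{k+1}$ on $X_k\cap B$ only and says nothing about the unbounded part of $X_k$; and the claim that $\phi_{k+1}|_B\approx\Id$ forces $P_k\subset X_{k+1}$ is unjustified, because $P_k\subset\phi_{k+1}(X_k)$ would require $\phi_{k+1}^{-1}(P_k)$, a set uniformly close to $P_k$, to lie in $X_k$ --- i.e.\ it would require $X_k$ to essentially contain $P_k$ already. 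More structurally: for fixed $m$ and all $j\ge m$, the set $X_j\cap P_m$ is contained in a $\bigl(\sum_{i>m}\varepsilon_i\bigr)$-neighborhood of $X_m\cap P_m$, because the later maps barely move $P_m$; so the union never acquires the points of $\C^n\setminus\phi_k(\C^n)$ --- which is exactly where the escaped hulls live, and exactly what $X_{k+1}\setminus X_k$ must consist of.

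The missing device is the limit construction. One forms the abstract long $\C^n$, $X=\bigcup_k X_k$, determined by $(\phi_k)$, so that $X_k$ sits inside $X_{k+1}\cong\C^n$ as the domain $\phi_k(\C^n)$: the nesting is then automatic, and $X_k$ is not Runge in $X_{k+1}$ because $\phi_k$ maps some ball $\Bcal^k$ onto a set that is not polynomially convex; hence $X_k$ is not Runge in $\C^n$ either, since polynomials restrict to $\Oscr(X_{k+1})$ --- so the ``persistence'' issue you single out as the delicate point is actually free, and has nothing to do with $(4_k)$ or \eqref{eq:Ukl}, which serve the no-plurisubharmonic-functions results. The real work is to prove that $X$ itself is biholomorphic to $\C^n$, so that the $X_k$ become domains in $\C^n$ exhausting it: for this one needs $\|\phi_k-\Id\|<\varepsilon_k$ \emph{and} $\|\phi_k^{-1}-\Id\|<\varepsilon_k$ on balls $B_{n_k}$ with $n_k\nearrow\infty$ and $\sum_k\varepsilon_k<\infty$, whence the parametrizations $\psi_k$ converge to a biholomorphism $\Psi\colon\C^n\to X$ (the push-out/Dixon--Esterle argument, \cite[Corollary 4.4.2]{Forstneric2011}). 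The summability, the two-sided control of $\phi_k^{\pm1}$ (which the paper isolates in Lemma \ref{lemmarunge}, and which Lemma \ref{lem:mainstep2} alone does not provide), and the limit biholomorphism are all absent from your sketch, and they are what actually makes the union equal to $\C^n$.
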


We shall construct such an example by ensuring that all Fatou--Bieberbach maps 
$\phi_k\colon \C^n\hra\C^n$ in the sequence (see Section \ref{sec:proof1}) 
have non-Runge images, but they approximate the identity map on increasingly large balls 
centered at the origin. For this purpose, we shall need the following lemma.

%
%
%  LEMMA ON RUNGE-NONRUNGE
%
%
\begin{lemma}\label{lemmarunge} 
Let $B$ and $\Bcal$ be a pair of closed disjoint balls in $\C^n$ $(n>1)$.
For every $\epsilon>0$ there exists a Fatou--Bieberbach map 
$\phi \colon \C^n\hra\C^n$ satisfying the following conditions:
\begin{itemize}
\item[\rm (a)]  $||\phi|_B-\Id||<\varepsilon$;
\vspace{1mm}
\item[\rm (b)]  $||\phi^{-1}|_B - \Id||<\varepsilon$; 
\vspace{1mm}
\item[\rm (c)]  $\phi(\Bcal)$ is not polynomially convex.
\end{itemize}
\end{lemma}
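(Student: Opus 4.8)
The plan is to reuse the \Wprocess\ machinery from Lemma \ref{lem:mainstep}, but to control the automorphism so tightly on $B$ that properties (a) and (b) hold simultaneously with the failure of polynomial convexity in (c). The essential observation is that all the constructions in Lemma \ref{lem:mainstep} and Lemma \ref{lem:mainstep2} already produce maps $\phi$ that approximate the identity on a prescribed polynomially convex set; here I simply take that set to be $B$ and exploit the extra freedom coming from the fact that $B$ and $\Bcal$ are disjoint balls.

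Here is the order in which I would carry out the steps. First, after an affine change of coordinates I would place $B$ and $\Bcal$ so that there is an affine complex line $\Lambda=\{0\}\times\C^{n-1}$ avoiding $B$, and I would choose a Fatou--Bieberbach map $\theta\colon\C^n\hra\C^*\times\C^{n-1}$ with Runge image, arranged (as in the proof of Lemma \ref{lem:mainstep2}, using Theorem \ref{th:AL} to correct $\theta$ by an automorphism of $\C^*\times\C^{n-1}$) so that $\theta$ is $\epsilon/2$-close to the identity on $B$ in such a way that $\theta(B)$ is polynomially convex. Second, I would place a copy $M$ of Stolzenberg's set (properties (1)--(3) in Lemma \ref{lem:mainstep}) inside $(\C^*\times\C^{n-1})\setminus\theta(\Bcal)$, far from $\theta(B)$, so that $\theta(B)\cup M$ is $\Oscr(\C^*\times\C^{n-1})$-convex and $\wh M$ meets $\{0\}\times\C^{n-1}\subset\C^n\setminus\theta(\C^n)$. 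Third, exactly as in Lemma \ref{lem:mainstep}, I would run an isotopy squeezing and translating $M$ into the interior $\theta(\mathring{\Bcal})$ while staying totally real and $\Oscr(\C^*\times\C^{n-1})$-convex and disjoint from $\theta(B)$, and apply Theorem \ref{th:AL} to obtain $\psi\in\Aut(\C^*\times\C^{n-1})$ with $\psi(M)\subset\theta(\mathring{\Bcal})$ and $\psi$ as close as I wish to the identity on $\theta(B)$. The desired map is $\phi:=\psi^{-1}\circ\theta\colon\C^n\hra\C^n$; then $M\subset\phi(\Bcal)$, so $\wh{\phi(\Bcal)}$ contains $\wh M$ and hence a point of $\{0\}\times\C^{n-1}\notin\phi(\C^n)$, giving $\phi(\Bcal)\neq\wh{\phi(\Bcal)}$, which is (c).

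The remaining point is to secure both (a) and (b) on $B$. For (a) I note that $\theta$ is $\epsilon/2$-close to $\Id$ on $B$ by construction, and $\psi^{-1}$ can be taken as close to $\Id$ as desired on the compact set $\theta(B)$ (since the isotopy is the identity near $\theta(B)$), so the composition $\phi=\psi^{-1}\circ\theta$ is within $\epsilon$ of $\Id$ on $B$ once the two approximations are taken fine enough. For (b) I would argue symmetrically: since $\phi$ is $\C^1$-close to $\Id$ on a neighborhood of $B$, its inverse $\phi^{-1}$ is close to $\Id$ on $\phi(B)$, and because $\phi(B)$ is in turn close to $B$, a routine compactness-and-continuity estimate transfers the bound to $B$ itself. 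Concretely, I would first arrange that $\phi$ is close to $\Id$ on a slightly enlarged ball $B''\supset B$; then $\phi(B)\subset B''$, the restriction $\phi\colon B''\to\phi(B'')$ has derivative uniformly close to $\Id$, and the inverse function theorem yields the estimate $\|\phi^{-1}|_B-\Id\|<\epsilon$.

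I expect the genuine obstacle to be the simultaneous control in (a) and (b): approximating $\phi$ to $\Id$ in the graph norm on $B$ is immediate from Theorem \ref{th:AL}, but converting this into a bound on $\phi^{-1}$ over $B$ (rather than over $\phi(B)$) requires that the image $\phi(B)$ not stray outside the region where $\phi^{-1}$ is controlled. I would handle this by doing all approximations on the enlarged set $B''$ and invoking uniform continuity of $\phi^{-1}$ together with Lemma \ref{lem:stability} to keep $\phi(B)$ polynomially convex and safely inside $B''$; the rest of the argument is a direct transcription of the \Wprocess.
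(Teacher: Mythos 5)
Your proposal is correct and follows essentially the same route as the paper: a Fatou--Bieberbach map $\theta\colon\C^n\hra\C^*\times\C^{n-1}$ close to the identity on an enlarged ball $B'\supset B$, followed by an automorphism of $\C^*\times\C^{n-1}$ (obtained from Theorem \ref{th:AL} via the \Wprocess) that fixes $\theta(B')$ approximately while forcing the hull of the image of $\Bcal$ to meet $\{0\}\times\C^{n-1}$. The paper disposes of condition (b) in one sentence by noting that closeness to the identity on $B'$ with $B\subset\mathring B'$ suffices; your more detailed inverse-function-theorem argument on the enlarged ball is just an expansion of that same observation.
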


\begin{proof}
Pick a slightly bigger ball $B'$ containing $B$ in the interior such that $B'\cap \Bcal=\emptyset$.
By an affine linear change of coordinates,  we may assume that $B'\subset \C^* \times\C^{n-1}$.
Choose a Fatou-Bieberbach map $\theta\colon\C^n\hra \C^*\times \C^{n-1}$ such that $\theta|_{B'}$
is close to the identity. (See the proof of Lemma \ref{lem:mainstep2}.) 
Theorem \ref{th:AL} provides a  $\psi\in\Aut(\C^*\times\C^{n-1})$ which
approximates the identity map on $\theta(B')$ and such that $\psi(\theta(\Bcal))$ is not polynomially convex
(in fact, its polynomial hull intersects the hyperplane $\{0\}\times\C^{n-1}$).
The composition $\phi=\psi\circ\theta\colon \C^n\hra \C^*\times\C^{n-1}$ then satisfies
Condition (a) on $B'$, and Condition (c). If $\phi$ is sufficiently close
to the identity on $B'$, then it also satisfies Condition (b) since $B\subset \mathring B'$.
\end{proof}

\begin{proof}[Proof of Proposition \ref{prop:nonRunge}]
Let $B_k=\B(0,k)\subset \C^n$ denote the closed ball of radius $k$ centered at  the origin.
Choose an integer $n_1\in\N$ and a small ball $\Bcal^1$ disjoint from $B_{n_1}$. 
Let $\phi_1\colon\C^n\to \C^n$ be a Fatou--Bieberbach map satisfying 
the following conditions:
\begin{enumerate}
\item $||\phi_1-\Id||<\varepsilon_1$ on $B_{n_1}$;
\vspace{1mm}
\item $||\phi_1^{-1}-\Id||<\varepsilon_1$ on $B_{n_1}$;
\vspace{1mm}
\item $\phi(\Bcal^1)$ is not polynomially convex.
\end{enumerate}
Suppose inductively that for some $k\in\N$ we have already found 
Fatou--Bieberbach maps $\phi_1,\ldots, \phi_k$, integers $n_1<n_2<\cdots <n_k$, 
and balls $\Bcal^j\subset \C^n\setminus B_{n_j}$ for $j=1,\ldots,k$ such that the following conditions hold:
\begin{itemize}
\item[$(1_{k})$]  $||\phi_{k}-\Id||<\varepsilon_{k}$ on $B_{n_{k}}$;
\vspace{1mm}
\item[$(2_{k})$]  $||\phi_{k}^{-1}-\Id||<\varepsilon_{k}$ on $B_{n_{k}}$;
\vspace{1mm}
\item[$(3_{k})$]  $\phi_{k}(\Bcal^{k})$  is not polynomially convex.
\end{itemize}
Choose an integer $n_{k+1}>n_{k}$ such that 
\[
	\phi_k(B_{n_k+1})\cup \phi_k(\phi_{k-1}(B_{n_{k-1}+2})) \cup 
	\ldots \cup\phi_k(\cdots(\phi_1(B_{n_1+k})))\cup\phi_k(\Bcal^k)\subset B_{n_{k+1}}
\]
and pick a ball $\Bcal^{k+1}\subset \C^n\setminus B_{n_{k+1}}$. 
By Lemma \ref{lemmarunge}, there exists a Fatou--Bieberbach map $\phi_{k+1}\colon \C^n\hra\C^n$  
satisfying the following conditions:
\begin{itemize}
\item[$(1_{k+1})$]  $||\phi_{k+1}-\Id||<\varepsilon_{k+1}$ on $B_{n_{k+1}}$;
\vspace{1mm}
\item[$(2_{k+1})$]  $||\phi_{k+1}^{-1}-\Id||<\varepsilon_{k+1}$ on $B_{n_{k+1}}$;
\vspace{1mm}
\item[$(3_{k+1})$]  $\phi_{k+1}(\Bcal^{k+1})$  is not polynomially convex.
\end{itemize}
This closes the induction step.

Let $X=\bigcup_{k=1}^\infty X_k$ be the long $\C^n$ determined by sequence $(\phi_k)_k$, let
$\iota_k \colon X_k\hra X_{k+1}$ denote the inclusion map, and let 
$\psi_k\colon \C^n \to X_k\subset X$ denote the biholomorphic map from $\C^n$ onto the 
$k$-th element of the exhaustion such that
\[
	\iota_k \circ \psi_k = \psi_{k+1}\circ \phi_k, \qquad k=1,2,\ldots. 
\]
(See Section \ref{sec:proof1}, in particular \eqref{eq:comm}.)
By the construction, the sequence $\psi_k(B_{n_k})$ is a Runge exhaustion of $X$. 
If the sequence $\varepsilon_k>0$ has been chosen to be summable, then 
the sequence $\psi_k$ converges on every ball $B_{n_j}$ and the limit
map $\Psi=\lim_{k\to\infty} \psi_k \colon \C^n \to X$ is a biholomorphism
(see \cite[Corollary 4.4.2, p.\ 115]{Forstneric2011}). 
In the terminology of Dixon and Esterle \cite[Theorem 5.2]{DixonEsterle1986},
we have that 
\[
	(\psi_k, B_{n_k}) \longrightarrow (\Psi,\C^n)\quad \text{as $k\to\infty$},
\]
where $\Psi(\C^n)=X$ and $\Psi$ is biholomorphic.
\end{proof}

\begin{remark}
If we only assume that the images of Fatou--Bieberbach maps $\phi_k\colon\C^n\hra \C^n$ 
contain large enough balls centered at the origin, we  get an exhaustion of a long 
$\C^n$ with Runge images of balls. By \cite[Theorem 3.4]{ArosioBracciWold2013}, 
such long $\C^n$ is biholomorphic to a Stein Runge domain in $\C^n$. 
Therefore, the following problem is closely related to Problem (C) stated in the Introduction.

\vspace{0.1cm}
\noindent 
(C') {\em If a long $\C^n$ is exhausted by Runge images of balls, is it necessarily biholomorphic to $\C^n$?}
\vspace{0.1cm}

In this connection, we mention that the first named author proved in his thesis \cite{Luka2016}
that $\C^n$ is the only Stein manifold with the density property
(see Definition \ref{def:DP}) having an exhaustion by Runge images of balls.
\end{remark}

%%%%%%%%%%
%%%%%%%%%%
%%%%%%%%%%
%%%%%%%%%%  ACKNOWLEDGEMENTS
%%%%%%%%%%
%%%%%%%%%%

\subsection*{Acknowledgements}
The authors are partially  supported by the research program P1-0291 and the grant J1-5432 from 
ARRS, Republic of Slovenia. A part of the work on this paper was done while F.\ Forstneri\v c was visiting
the Erwin Schr\"odinger Institute in Vienna, Austria, and of the Faculty of Science,
University of Granada, Spain, in November 2015. He wishes to thank both institutions for the invitation and excellent
working conditions. The author express their gratitude to anonymous referees for their valuable remarks
which helped us to improve and streamline the presentation.

%%%%%%%%%%
%%%%%%%%%%
%%%%%%%%%%
%%%%%%%%%%
%%%%%%%%%%
%%%%%%%%%%

\vskip 1cm
 
\noindent Luka Boc Thaler

\noindent Institute of Mathematics, Physics and Mechanics, Jadranska 19, SI--1000 Ljubljana, Slovenia

% \noindent Faculty of Education, University of Ljubljana, SI--1000 Ljubljana, Slovenia.

\noindent  e-mail: {\tt lukabocthaler@yahoo.com}

\vspace*{0.4cm}

\noindent Franc Forstneri\v c

\noindent Faculty of Mathematics and Physics, University of Ljubljana, and Institute
of Mathematics, Physics and Mechanics, Jadranska 19, SI--1000 Ljubljana, Slovenia

\noindent e-mail: {\tt franc.forstneric@fmf.uni-lj.si}

\end{document}